\documentclass[leqno]{siamltex704}
\usepackage{graphicx}
\usepackage{mathrsfs}
\usepackage{rotating}
\usepackage{lscape}
\usepackage{float}
\usepackage{amsfonts,amssymb}
\usepackage{dsfont}
\usepackage{pifont}
\usepackage{amsmath}
\usepackage{tikz}
\usetikzlibrary{shapes.geometric, arrows.meta, positioning}
\usetikzlibrary{calc}
\usepackage{tikz-3dplot}
\usetikzlibrary{3d,calc}
\usepackage{wrapfig} % Allows in-line images
\usepackage{hyperref}
\usepackage{multirow}
\usepackage{bm}
\usepackage{mathtools}
\usepackage{cases}
\usepackage{subfigure}
\usepackage{colortbl}
\usepackage{enumitem}
\definecolor{mygray}{gray}{.9}
\definecolor{mypink}{rgb}{.99,.91,.95}
\definecolor{mycyan}{cmyk}{.3,0,0,0}
\newcommand{\vertiii}[1]{{\left\vert\kern-0.25ex\left\vert\kern-0.25ex\left\vert #1
    \right\vert\kern-0.25ex\right\vert\kern-0.25ex\right\vert}}
\def\Cof{\operatorname{Cof}}
\def\det{\operatorname{det}}

\def\bn{{\boldsymbol n}}

\newcommand\subsetsim{\mathrel{%
  \ooalign{\raise0.2ex\hbox{$\subset$}\cr\hidewidth\raise-0.8ex\hbox{\scalebox{0.9}{$\sim$}}\hidewidth\cr}}}

\def\bn{{\boldsymbol n}}

\newcommand{\PreserveBackslash}[1]{\let\temp=\\#1\let\\=\temp}
\newcolumntype{C}[1]{>{\PreserveBackslash\centering}p{#1}}
\newcolumntype{R}[1]{>{\PreserveBackslash\raggedleft}p{#1}}
\newcolumntype{L}[1]{>{\PreserveBackslash\raggedright}p{#1}}
\def\3bar{{|\!|\!|}}
\def\bx{{\boldsymbol{x}}}
\def\bz{{\boldsymbol{z}}}
\def\bs{{\boldsymbol{s}}}

\newcommand{\dT}{\, dT}
\newcommand{\dS}{\, dS}
\title{Beyond Taylor: Divergence-Based Functional Expansions and Their Application to Numerical Integration}

\author{Junping Wang
\thanks{Directorate for Mathematical and Physical Sciences, U.S. National Science Foundation, Alexandria, VA 22314 (jwang@nsf.gov). The research of Wang was supported in part by the NSF IR/D program, while working at U.S. National Science Foundation. However, any opinion, finding, and conclusions or recommendations expressed in this material are those of the author and do not necessarily reflect the views of the U.S. National Science Foundation.}}

\begin{document}
\tikzstyle{startstop} = [rectangle, rounded corners, minimum width=3cm, 
    minimum height=1cm, text centered, draw=black, fill=gray!20]
\tikzstyle{process} = [rectangle, minimum width=3cm, minimum height=1cm, 
    text centered, draw=black, fill=blue!15]
\tikzstyle{decision} = [diamond, aspect=2, minimum width=3cm, minimum height=1cm, 
    text centered, draw=black, fill=orange!20]
\tikzstyle{arrow} = [thick, -{Stealth[length=3mm, width=2mm]}]

\maketitle

\begin{abstract}
This paper introduces a new functional expansion framework that extends classical ideas beyond the Taylor series. Unlike traditional Taylor expansions based on local polynomial approximations, the proposed approach arises from exact differential identities that link a function and its derivatives 
through polynomial weight factors. This formulation expresses smooth functions via divergence-based relations connecting derivatives of all orders with systematically scaled polynomial coefficients.
This framework provides a natural foundation for constructing high-order numerical quadrature formulas, particularly for multi-dimensional domains. By exploiting the divergence structure, volume integrals are systematically transformed into boundary integrals using the Divergence Theorem, recursively reducing the integration domain from an $n$-dimensional body to its $(n-1)$-dimensional facets, and ultimately to its vertices.

The article further enhances the framework's accuracy by introducing a complex-shift technique. It is demonstrated that by positioning the expansion center at specific roots of unity in the complex plane, lower-order error terms are annihilated, yielding high-order real-valued quadrature rules with minimal function evaluations. Additionally, a rigorous geometric analysis of the affine transformations required for surface integration is provided, deriving explicit formulas for the transformation of normal vectors and surface measures. The proposed method offers a robust, systematic, and computationally efficient alternative to tessellation-based quadrature for arbitrary flat-faced polytopes.
\end{abstract}

\begin{keywords}
Taylor expansion, Integral representation, numerical quadrature, numerical integration.
\end{keywords}

\begin{AMS}
Primary 41A58, 41A60, 41A55
\end{AMS}

\pagestyle{myheadings}

\section{Introduction}

Numerical integration over multidimensional domains is a cornerstone of computational science, essential to finite element methods, weak Galerkin methods, discontinuous Galerkin schemes, and the calculation of inertial properties in computer graphics. While Gaussian quadrature provides optimal accuracy for hypercubes and simplices, integrating over general polytopes remains computationally demanding. Standard approaches typically rely on tessellation—decomposing the polytope into a union of disjoint simplices (triangles or tetrahedra). However, for complex geometries arising in Voronoi tessellations, interface reconstruction, or polyhedral finite elements, this sub-structuring can generate sliver elements that degrade numerical stability and significantly increase computational cost.

To circumvent the difficulties associated with volume meshing, methods based on dimension reduction have gained prominence. By invoking the Divergence Theorem (or Stokes' Theorem), volume integrals can be transformed into boundary integrals, recursively reducing the problem dimension until it reaches the vertices. This strategy has been explored in various forms, notably by Mirtich \cite{Mirtich1996} for mass property calculation and Lasserre \cite{Lasserre1998} for homogeneous functions. More recently, variations of this technique have been applied to integrate polynomials over arbitrary polytopes \cite{Sudhakar2014, Chin2020, Sommariva2007}.

In this work, we present a unified framework for numerical integration based on a novel functional expansion. Unlike the classical Taylor series, which expresses a function via pointwise derivatives, our expansion is derived through a recursive divergence identity. This structure naturally reformulates the integrand as a sum of terms in divergence form, allowing immediate application of the Divergence Theorem without ad-hoc algebraic manipulations.

Our contribution is threefold. First, we derive a general functional expansion in $\mathbb{R}^n$ that expresses a smooth function as a series of divergence terms. This leads to a recursive integration algorithm that operates purely on the boundary data of the polytope, completely avoiding internal discretization.

Second, we introduce a method to enhance the accuracy of these quadrature rules by extending the expansion center into the complex plane. In the one-dimensional case, we demonstrate that by selecting a complex shift based on roots of unity, we can construct high-order integration rules with a minimal number of function evaluations. For instance, we derive a real-valued quadrature rule exact for cubic polynomials using a complex-shifted trapezoidal-like formulation.

Third, we provide a rigorous geometric analysis of the affine transformations required for surface integration. While the concept of mapping a reference element to a physical facet is standard, the transformation of normal vectors and surface measures is often treated heuristically. We establish precise identities relating the surface measures and normal vectors between the reference and physical configurations, ensuring the method's implementation is mathematically robust.

The remainder of this paper is organized as follows. Section \ref{Section:2} introduces the fundamental functional expansion in one dimension, and Section \ref{Section:3} demonstrates the complex shift technique for improving quadrature accuracy. Section \ref{Section:4} extends this expansion to $n$-dimensional Euclidean space using a divergence-based recursion. Section \ref{Section:5} details the restriction of this expansion to flat surfaces and derives the affine transformation laws. Section \ref{Section:6} demonstrates the application of the divergence-based functional expansion to numerical integration in multidimensional domains. Finally, Sections \ref{Section:7} and \ref{Section:8} provide rigorous proofs for the volume of parallelepipeds and the transformation of normal vectors, verifying the geometric consistency of the proposed framework.

\section{The One-Dimensional Case: Functional Expansion}\label{Section:2}

Let $f \in C^\infty(\mathbb{R})$ be a smooth function. For any integer $k \geqslant 0$, the following identity holds:
\begin{equation}\label{eq:basic_identity_00}
    (k+1) x^k f^{(k)} = (x^{k+1}f^{(k)})' - x^{k+1} f^{(k+1)}.
\end{equation}
Rearranging terms, we obtain the equivalent form:
\begin{equation}\label{eq:basic_identity}
    x^k f^{(k)} = \frac{1}{k+1}(x^{k+1}f^{(k)})' - \frac{1}{k+1} x^{k+1} f^{(k+1)}.
\end{equation}

Repeated application of \eqref{eq:basic_identity} for $k=0, 1, 2, \dots$ leads to a functional expansion of $f$:
\begin{equation}\label{eq:expansion_derivation}
\begin{aligned}
    f &= (xf)' - x f' \\
      &= (xf)' - \frac{1}{2!} (x^2 f')' + \frac{1}{2!} x^2 f'' \\
      &= (xf)' - \frac{1}{2!} (x^2 f')' + \frac{1}{3!} (x^3 f'')' - \frac{1}{3!} x^3 f''' \\
      &= (xf)' - \frac{1}{2!} (x^2 f')' + \frac{1}{3!} (x^3 f'')' - \frac{1}{4!} (x^4 f''')' + \frac{1}{4!} x^4 f'''' \\
      &= \sum_{k=0}^m \frac{(-1)^k}{(k+1)!} \bigl( x^{k+1}f^{(k)}(x) \bigr)' + \frac{(-1)^{m+1}}{(m+1)!} x^{m+1}f^{(m+1)}(x).
\end{aligned}
\end{equation}
The result can be summarized as follows.

\begin{theorem}\label{thm:finite_expansion}
Let $m \geqslant 0$ be an integer and $f \in C^{m+1}(\mathbb{R})$. Then
\begin{equation}\label{eq:finite_sum}
    f(x) = \sum_{k=0}^m \frac{(-1)^k}{(k+1)!} \Bigl( x^{k+1}f^{(k)}(x) \Bigr)' + \frac{(-1)^{m+1}}{(m+1)!} x^{m+1}f^{(m+1)}(x).
\end{equation}
Moreover, if $f \in C^\infty(\mathbb{R})$, we have the infinite expansion
\begin{equation}\label{eq:infinite_sum}
    f(x) = \sum_{k=0}^\infty \frac{(-1)^k}{(k+1)!} \Bigl( x^{k+1}f^{(k)}(x) \Bigr)',
\end{equation}
provided the series converges.
\end{theorem}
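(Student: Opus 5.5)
We argue by induction on $m$, using the basic identity \eqref{eq:basic_identity} as the only ingredient. Note first that for $f \in C^{m+1}(\mathbb{R})$ and $0 \leqslant k \leqslant m$, the function $x^{k+1}f^{(k)}$ lies in $C^1$. Its derivative is $(k+1)x^k f^{(k)} + x^{k+1}f^{(k+1)}$ by the product rule. This is exactly \eqref{eq:basic_identity_00}, so every term in \eqref{eq:finite_sum} is well defined and continuous.

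For the base case $m=0$, the claim reads $f = (xf)' - xf'$. This is \eqref{eq:basic_identity} with $k=0$ and requires only $f\in C^1$.

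For the inductive step, assume \eqref{eq:finite_sum} holds for $m-1$ whenever $f \in C^{m}$, and let $f \in C^{m+1}$. Then the remainder is
\[
\frac{(-1)^{m}}{m!} x^{m}f^{(m)}.
\]
Apply \eqref{eq:basic_identity} with $k=m$, which is legitimate since $f^{(m)}\in C^1$. This gives
\[
\frac{(-1)^{m}}{m!} x^{m}f^{(m)} = \frac{(-1)^m}{(m+1)!}\bigl(x^{m+1}f^{(m)}\bigr)' + \frac{(-1)^{m+1}}{(m+1)!}x^{m+1}f^{(m+1)}.
\]
The first term is the $k=m$ summand and the second is the new remainder, so \eqref{eq:finite_sum} holds for $m$. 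This step is just the chain of equalities displayed in \eqref{eq:expansion_derivation}, made rigorous. The only care needed is bookkeeping the factor $\frac{1}{m!}\cdot\frac{1}{m+1} = \frac{1}{(m+1)!}$ and the sign.

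For the infinite expansion \eqref{eq:infinite_sum}, take $f\in C^\infty$ and fix $x$. By \eqref{eq:finite_sum}, the $m$-th partial sum $S_m(x)$ of the series equals
\[
S_m(x) = f(x) - R_m(x), \qquad R_m(x)=\frac{(-1)^{m+1}}{(m+1)!}x^{m+1}f^{(m+1)}(x).
\]
So the series converges to $f(x)$ exactly when $R_m(x)\to 0$. This is the main subtlety, since "provided the series converges" is not literally enough: convergence of $S_m$ only guarantees that $R_m$ has a limit, not that the limit is zero.

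I would read the hypothesis as convergence of the series to $f$, equivalently $R_m(x)\to 0$. Alternatively, I would show that convergence forces the limit $L$ of $R_m(x)$ to vanish under a mild growth condition. One example of such a condition is $\sup_m |x|^{m+1}|f^{(m+1)}(x)|/(m+1)! \to 0$, which holds for instance for entire $f$ by Cauchy estimates.

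A cleaner route is to note that if $R_m(x)\to L$, then the individual terms tend to zero:
\[
R_{m}(x)-R_{m-1}(x) \;=\; -\frac{(-1)^m}{(m+1)!}\bigl(x^{m+1}f^{(m)}\bigr)' \;\to\; 0.
\]
Expanding this term gives
\[
\frac{(-1)^{m}}{(m+1)!}\bigl(x^{m+1}f^{(m)}\bigr)' = -R_m(x) - R_{m-1}(x).
\]
Letting $m\to\infty$ then yields $0 = -2L$, so $L=0$. This identity is the key observation that resolves the apparent gap, and I expect checking the signs in it to be the only real obstacle. Once $L=0$, the series sums to $f(x)$, which proves \eqref{eq:infinite_sum}.
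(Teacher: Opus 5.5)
Your proof of the finite expansion \eqref{eq:finite_sum} is correct and follows the paper. The paper displays the chain \eqref{eq:expansion_derivation}, obtained by applying \eqref{eq:basic_identity} for $k=0,1,2,\dots$, and your induction is the rigorous version of that chain.

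The gap is in your ``cleaner route'' for \eqref{eq:infinite_sum}. Set $T_m=\frac{(-1)^m}{(m+1)!}\bigl(x^{m+1}f^{(m)}\bigr)'$. The product rule gives
\[
T_m=\frac{(-1)^m}{m!}x^mf^{(m)}+\frac{(-1)^m}{(m+1)!}x^{m+1}f^{(m+1)}=R_{m-1}(x)-R_m(x),
\]
not $-R_m(x)-R_{m-1}(x)$. Your own previous line, $R_m-R_{m-1}=-T_m$, already says this. So your ``key observation'' is just the telescoping relation again. Letting $m\to\infty$ gives $0=L-L$, which says nothing about $L$. The sign check you flagged as the only obstacle is exactly where the argument fails.

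No argument can show that mere convergence forces $L=0$, because the claim is false. Take $f\in C^\infty(\mathbb{R})$ with $f(x)=1/x$ for $x>\tfrac12$. For example, let $f=\chi(x)/x$, where $\chi$ is smooth, vanishes near $0$, and equals $1$ on $[\tfrac12,\infty)$. For $x>\tfrac12$ we have $x^{k+1}f^{(k)}(x)=(-1)^k k!$, a constant. So every term of \eqref{eq:infinite_sum} vanishes, and the series converges to $0\neq f(x)$. Correspondingly, $R_m(x)=1/x$ for every $m$.

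This example also shows that the second half of the statement is false if ``provided the series converges'' is read literally. You must therefore keep your first reading: \eqref{eq:infinite_sum} holds at $x$ exactly when $R_m(x)\to 0$, and the hypothesis has to be interpreted that way. One sufficient condition is that the Taylor series of $f$ about $x$ converges at the point $0$, since $R_m(x)$ is precisely its term of order $m+1$; this covers, for instance, entire $f$. With that reading the infinite expansion follows immediately from the finite one. The paper does no more than this: it passes to the limit without addressing the issue.
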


\medskip

Next, for any fixed point $x_0$ (real or complex), replacing $x$ by $x-x_0$ in \eqref{eq:basic_identity_00} gives
\[
    (k+1)(x-x_0)^k f^{(k)} = \bigl((x-x_0)^{k+1}f^{(k)}\bigr)' - (x-x_0)^{k+1} f^{(k+1)}.
\]
Consequently, we have
\begin{equation}\label{eq:shifted_identity}
    (x-x_0)^k f^{(k)} = \frac{1}{k+1}\bigl((x-x_0)^{k+1}f^{(k)}\bigr)' - \frac{1}{k+1} (x-x_0)^{k+1} f^{(k+1)},
\end{equation}
which yields the following generalized result.

\begin{theorem}\label{thm:shifted_expansion}
Let $m \geqslant 0$ be an integer and $x_0$ be a given real or complex number. If $f \in C^{m+1}(\mathbb{R})$, then
\begin{equation}\label{eq:shifted_finite}
    f(x) = \sum_{k=0}^m \frac{(-1)^k}{(k+1)!} \Bigl( (x-x_0)^{k+1}f^{(k)}(x) \Bigr)' + \frac{(-1)^{m+1}}{(m+1)!} (x-x_0)^{m+1}f^{(m+1)}(x).
\end{equation}
Furthermore, for $f \in C^\infty(\mathbb{R})$, the following series expansion holds:
\begin{equation}\label{eq:shifted_infinite}
    f(x) = \sum_{k=0}^\infty \frac{(-1)^k}{(k+1)!} \Bigl( (x-x_0)^{k+1}f^{(k)}(x) \Bigr)',
\end{equation}
provided it converges.
\end{theorem}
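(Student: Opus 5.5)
The plan is to derive Theorem~\ref{thm:shifted_expansion} from the shifted one-step identity \eqref{eq:shifted_identity} by induction on $m$, exactly as Theorem~\ref{thm:finite_expansion} came from \eqref{eq:basic_identity}.

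First I will check \eqref{eq:shifted_identity} under the regularity available. For $f\in C^{m+1}(\mathbb{R})$ and $0\le k\le m$, the function $(x-x_0)^{k+1}f^{(k)}(x)$ is $C^1$. This holds also when $x_0\in\mathbb{C}$, since we then have a complex-valued function of the real variable $x$. The product rule gives
\[
\bigl((x-x_0)^{k+1}f^{(k)}\bigr)' = (k+1)(x-x_0)^k f^{(k)} + (x-x_0)^{k+1}f^{(k+1)}.
\]
It uses only $\frac{d}{dx}(x-x_0)^{k+1}=(k+1)(x-x_0)^k$, which is valid for complex $x_0$ because $(x-x_0)^{k+1}$ is a polynomial in $x$ with complex coefficients. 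Dividing by $k+1$ yields \eqref{eq:shifted_identity}.

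Next comes the induction for \eqref{eq:shifted_finite}. The base case $m=0$ is \eqref{eq:shifted_identity} with $k=0$: $f=((x-x_0)f)'-(x-x_0)f'$. For the inductive step, assume the formula holds for $m-1$ and $f\in C^{m+1}$. The remainder is $\frac{(-1)^m}{m!}(x-x_0)^m f^{(m)}$. Apply \eqref{eq:shifted_identity} with $k=m$ to it:
\[
\frac{(-1)^m}{m!}(x-x_0)^m f^{(m)} = \frac{(-1)^m}{(m+1)!}\bigl((x-x_0)^{m+1}f^{(m)}\bigr)' + \frac{(-1)^{m+1}}{(m+1)!}(x-x_0)^{m+1}f^{(m+1)}.
\]
The first piece is the new $k=m$ summand and the second is the new remainder. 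This closes the induction. Alternatively, one could substitute $y=x-x_0$ into Theorem~\ref{thm:finite_expansion} for real $x_0$. The direct induction is cleaner because it also covers complex $x_0$, where translation of the real line is not available.

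Finally, for $f\in C^\infty$, \eqref{eq:shifted_infinite} follows by letting $m\to\infty$ in \eqref{eq:shifted_finite}. The partial sums $S_m$ satisfy $f - S_m = R_m := \frac{(-1)^{m+1}}{(m+1)!}(x-x_0)^{m+1}f^{(m+1)}$. The main subtlety is the meaning of ``provided it converges.'' If the series converges at $x$, then $S_m\to S$, so $R_m\to f-S$. The series equals $f$ exactly when $R_m(x)\to0$, and the finite identity gives nothing more without that.

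So I will read the hypothesis as convergence of the series to $f$, or equivalently $R_m(x)\to 0$, and state that equivalence explicitly. I will also note the sufficient condition that $R_m(x)\to0$ whenever $|f^{(m+1)}(x)|\,|x-x_0|^{m+1}/(m+1)!\to0$, for instance when $f$ is analytic with radius exceeding $|x-x_0|$.
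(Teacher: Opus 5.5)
Your proof is correct and follows the paper's route: establish the shifted one-step identity \eqref{eq:shifted_identity} and apply it repeatedly, which is exactly your induction on $m$. For that identity you use the product rule directly, which is the proper justification when $x_0$ is complex, whereas the paper simply substitutes $x-x_0$ for $x$. Your reading of ``provided it converges'' as $R_m(x)\to 0$ is a correct sharpening of the paper's hypothesis: the partial sums equal $f(x)-R_m(x)$, so convergence of the series alone only yields $f(x)-\lim_m R_m(x)$.
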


\section{Numerical Integration in One Dimension}\label{Section:3}

In this section, we employ the expansions \eqref{eq:shifted_finite}-\eqref{eq:shifted_infinite} to derive several numerical integration formulas in one dimension. These formulas are presented primarily for illustration, as many other numerical integration schemes can be obtained from this expansion through varying choices of $x_0$ and truncation.

For a given interval $[a,b]$, integrating the expansion \eqref{eq:shifted_infinite} over $[a, b]$ yields
\begin{equation}\label{eq:int_general}
    \int_a^b f(x) \, dx = \sum_{k=0}^\infty \frac{(-1)^k}{(k+1)!} \Bigl( (b-x_0)^{k+1} f^{(k)}(b) - (a-x_0)^{k+1}f^{(k)}(a) \Bigr).
\end{equation}
Using the first $m+1$ terms of this series provides an approximation for the integral:
\begin{equation}\label{eq:int_approx}
    \int_a^b f(x) \, dx \approx I_{m+1} := \sum_{k=0}^{m} \frac{(-1)^k}{(k+1)!} \Bigl( (b-x_0)^{k+1} f^{(k)}(b) - (a-x_0)^{k+1}f^{(k)}(a) \Bigr).
\end{equation}
From the identity \eqref{eq:shifted_finite}, the corresponding remainder is
\begin{equation}\label{eq:remainder_general}
    R_{m+1} = \frac{(-1)^{m+1}}{(m+1)!} \int_a^b (x-x_0)^{m+1}f^{(m+1)}(x) \, dx.
\end{equation}

By setting $x_0=b$, formula \eqref{eq:int_general} simplifies to
\begin{equation}\label{eq:taylor_a}
    \int_a^b f(x) \, dx = \sum_{k=0}^\infty \frac{(b-a)^{k+1}}{(k+1)!} f^{(k)}(a), 
\end{equation}
which coincides with the result obtained from the standard Taylor expansion about $x=a$. The corresponding approximation is
\begin{equation}\label{eq:taylor_a_approx}
    \int_a^b f(x) \, dx \approx \sum_{k=0}^m \frac{(b-a)^{k+1}}{(k+1)!} f^{(k)}(a), 
\end{equation}
with remainder
\begin{equation}\label{eq:taylor_a_rem}
    R_{m+1} = \frac{(-1)^{m+1}}{(m+1)!} \int_a^b (x-b)^{m+1}f^{(m+1)}(x) \, dx.
\end{equation}

Similarly, choosing $x_0=a$ gives the expansion about the right endpoint:
\begin{equation}\label{eq:taylor_b}
    \int_a^b f(x) \, dx = \sum_{k=0}^\infty \frac{(-1)^k(b-a)^{k+1}}{(k+1)!} f^{(k)}(b), 
\end{equation}
with the approximation
\begin{equation}\label{eq:taylor_b_approx}
    \int_a^b f(x) \, dx \approx \sum_{k=0}^m \frac{(-1)^k(b-a)^{k+1}}{(k+1)!} f^{(k)}(b), 
\end{equation}
and remainder 
\begin{equation}\label{eq:taylor_b_rem}
    R_{m+1} = \frac{(-1)^{m+1}}{(m+1)!} \int_a^b (x-a)^{m+1}f^{(m+1)}(x) \, dx.
\end{equation}

To derive a numerical integration formula involving only the function value at the midpoint $c = \frac{1}{2}(a+b)$, one may apply \eqref{eq:taylor_b_approx} to the sub-integral $\int_{a}^c f(x) \, dx$ and \eqref{eq:taylor_a_approx} to $\int_{c}^b f(x) \, dx$, subsequently combining the two results. The details of this exploration are left to the reader.

Alternatively, selecting the midpoint $x_0 = \frac{1}{2}(a+b)$ leads to 
\begin{equation}\label{eq:midpoint_expansion}
    \int_a^b f(x) \, dx = \sum_{k=0}^\infty \frac{(-1)^k}{(k+1)!} \frac{(b-a)^{k+1}}{2^{k+1}} \Bigl[ f^{(k)}(b) + (-1)^k f^{(k)}(a) \Bigr],
\end{equation}
with the approximation
\begin{equation}\label{eq:midpoint_approx}
    \int_a^b f(x) \, dx \approx \sum_{k=0}^m \frac{(-1)^k}{(k+1)!} \frac{(b-a)^{k+1}}{2^{k+1}} \Bigl[ f^{(k)}(b) + (-1)^k f^{(k)}(a) \Bigr],
\end{equation}
and remainder
\begin{equation}\label{eq:midpoint_rem}
    R_{m+1} = \frac{(-1)^{m+1}}{(m+1)!} \int_a^b \left(x-\frac{a+b}{2}\right)^{m+1}f^{(m+1)}(x) \, dx.
\end{equation}

Retaining only the first term of \eqref{eq:midpoint_approx} yields the {\em Trapezoidal rule}: 
\[
    \int_a^b f(x) \, dx \approx \frac{b-a}{2} \bigl( f(a) + f(b) \bigr),
\]
whose remainder, obtained from \eqref{eq:midpoint_rem} with $m=0$, is
\begin{equation*}
    R_{1} = - \int_a^b \left(x-\frac{a+b}{2}\right)f'(x) \, dx,
\end{equation*}
which vanishes for any linear function $f$. 

Including the first three terms of \eqref{eq:midpoint_approx} provides a higher-order approximation:
\begin{equation}\label{eq:higher_order_3}
\begin{aligned}
    \int_a^b f(x) \, dx \approx \,\, &\frac{b-a}{2} \bigl( f(a)+f(b) \bigr) - \frac{(b-a)^2}{8} \bigl( f'(b)-f'(a) \bigr) \\
    &+ \frac{(b-a)^3}{48} \bigl( f''(b)+f''(a) \bigr),
\end{aligned}
\end{equation}
whose remainder, from \eqref{eq:midpoint_rem} with $m=2$, is
\begin{equation}\label{eq:higher_order_3_rem}
    R_{3} = -\frac{1}{3!} \int_a^b \left(x-\frac{a+b}{2}\right)^{3}f'''(x) \, dx.
\end{equation}
This remainder vanishes for all cubic polynomials $f$; thus, formula \eqref{eq:higher_order_3} is exact for cubic functions.

If only the first two terms are retained, the approximation becomes
\begin{equation}\label{eq:higher_order_2}
    \int_a^b f(x) \, dx \approx \frac{b-a}{2} \bigl( f(a)+f(b) \bigr) - \frac{(b-a)^2}{8} \bigl( f'(b)-f'(a) \bigr),
\end{equation}
with remainder
\begin{equation}\label{eq:higher_order_2_rem}
    R_{2} = \frac{1}{2!} \int_a^b \left(x-\frac{a+b}{2}\right)^{2}f''(x) \, dx.
\end{equation}
The remainder $R_2$ vanishes for linear functions but generally not for quadratic ones; hence, the two-term approximation \eqref{eq:higher_order_2} remains exact only for linear functions.

\subsection{Improving Accuracy via a Complex Shift}\label{subsection_complexshift}

Can the accuracy of \eqref{eq:higher_order_2} be improved while still using only the first two terms of the expansion—ideally making it exact for quadratic polynomials? The answer is affirmative in one dimension by introducing an asymmetric complex shift of the expansion point $x_0$. The core idea is to select a complex number $x_0$ such that the contribution of the remainder term \eqref{eq:remainder_general} with $m=2$ vanishes for all quadratic polynomials. This condition is satisfied when
\[
(b-x_0)^3 = (a-x_0)^3 \implies b-x_0 = \omega_3 (a-x_0),
\] 
where $\omega_3$ denotes a nontrivial cubic root of unity (i.e., $\omega_3^3=1$ and $\omega_3 \neq 1$). 

Solving for $x_0$ yields:
\[
x_0 = \frac{b-\omega_3 a}{1-\omega_3}, \quad b-x_0 = \frac{\omega_3 (b-a)}{\omega_3-1}, \quad a-x_0 = \frac{b-a}{\omega_3-1}.
\]
Substituting this $x_0$ into the two-term formula gives a complex-valued approximation:
\begin{equation}
    \int_a^b f(x) \, dx \approx \frac{b-a}{\omega_3-1} \bigl( \omega_3 f(b) - f(a) \bigr) - \frac{(b-a)^2}{2(\omega_3-1)^2} \bigl( \omega_3^2 f'(b) - f'(a) \bigr).
\end{equation}
Taking the real part produces a real-valued high-order integration rule that is exact for all quadratic polynomials.  

To compute the real part of $\frac{\omega_3}{\omega_3-1}$, observe that:
\begin{equation}\label{eq:real_part_calc}
    \frac{\omega_3}{\omega_3-1} = \frac{\omega_3(\bar\omega_3 -1)}{(\omega_3-1)(\bar\omega_3 -1)} = \frac{1-\omega_3}{2(1 - \Re(\omega_3))}.
\end{equation}
Hence, $\Re\bigl(\frac{\omega_3}{\omega_3-1}\bigr) = \frac{1}{2}$. It follows that:
\begin{equation}
    \Re\left( \frac{1}{\omega_3-1} \right) = \Re\left( \frac{\omega_3}{\omega_3-1} \right) - 1 = -\frac{1}{2}.
\end{equation}

For the derivative term, we use the identity
\[
 \frac{\omega_3^2}{(\omega_3-1)^2} - \frac{1}{(\omega_3-1)^2} = \frac{\omega_3+1}{\omega_3-1} = \frac{\omega_3}{\omega_3-1} + \frac{1}{\omega_3-1},
\]
which implies $\Re\bigl( \frac{\omega_3^2}{(\omega_3-1)^2} \bigr) = \Re\bigl(\frac{1}{(\omega_3-1)^2}\bigr)$. By letting $A = 1 - \Re(\omega_3)$, we have from \eqref{eq:real_part_calc} that $\frac{\omega_3}{\omega_3-1} = \frac{1-\omega_3}{2A}$. Note that $\Re((1-\omega_3)^2) = -2A \Re(\omega_3)$. Thus:
\[
    \Re\left( \frac{\omega_3^2}{(\omega_3-1)^2} \right) = \frac{1}{4A^2} \Re\bigl((1-\omega_3)^2\bigr) = \frac{-\Re(\omega_3)}{2(1-\Re(\omega_3))}.
\]
For $\omega_3 = -\frac{1}{2} + \frac{\sqrt{3}}{2}i$, we find $\Re\bigl( \frac{\omega_3^2}{(\omega_3-1)^2} \bigr) = \frac{1}{6}$. The resulting real-valued quadrature rule is:
\begin{equation}\label{eq:quad_exact}
    \int_a^b f(x) \, dx \approx \frac{b-a}{2} \bigl( f(b) + f(a) \bigr) - \frac{(b-a)^2}{12} \bigl( f'(b) - f'(a) \bigr),
\end{equation}
which is exact for all quadratic polynomials. This generalizes the trapezoidal rule by incorporating derivative information and optimizing expansion through complex symmetry.

\subsection{Four-term Formula via a Complex Shift}\label{subsection_complexshift4term}

The complex shift approach can be extended to formulas involving any even number of terms. For the four-term formula, we select $x_0$ such that the remainder term vanishes for all polynomials of degree 4:
\begin{equation}\label{eq:remainder_vanish_4}
    R_{4} = \frac{1}{4!} \int_a^b (x-x_0)^{4} f^{(4)}(x) \, dx = 0.
\end{equation}
This condition is satisfied when $(b-x_0)^5 = (a-x_0)^5$, implying $b-x_0 = \omega_5 (a-x_0)$ where $\omega_5 \neq 1$ is a 5th root of unity. Solving for $x_0$ gives $x_0 = \frac{b-\omega_5 a}{1-\omega_5}$. Defining $\gamma_5 = \frac{\omega_5}{\omega_5-1}$ (the Möbius transform of $\omega_5$), we have:
\[
    b-x_0 = \gamma_5 (b-a), \quad a-x_0 = (\gamma_5-1)(b-a).
\]
Since the Möbius transform maps the unit circle to the vertical line $\Re(z) = \frac{1}{2}$, it follows that $1-\gamma_5 = \bar{\gamma}_5$, leading to $\Re((1-\gamma_5)^k) = \Re(\gamma_5^k)$ for any integer $k$.

The resulting real-valued integration rule, exact for polynomials of degree 4, is:
\begin{equation}
\begin{aligned}
    \int_a^b f(x) \, dx \approx \,\, & \frac{b-a}{2} \bigl(f(b) + f(a)\bigr) - \frac{(b-a)^2 \Re(\gamma_5^2)}{2!} \bigl( f'(b) - f'(a) \bigr) \\
    & + \frac{(b-a)^3 \Re(\gamma_5^3)}{3!} \bigl( f''(b) + f''(a) \bigr) \\
    & - \frac{(b-a)^4 \Re(\gamma_5^4)}{4!} \bigl( f'''(b) - f'''(a) \bigr).
\end{aligned}
\end{equation}

Table \ref{Table1} provides the real parts of $\gamma_5^m$ for different choices of $\omega_5$.

\begin{table}[ht]
\centering
\caption{Real parts of $\gamma_5^m$ for nontrivial 5th roots of unity}\label{Table1}
\renewcommand{\arraystretch}{1.5}
\begin{tabular}{ccc}
\hline
\textbf{Term} & $\omega_5 = e^{\pm \frac{4\pi i}{5}}$ & $\omega_5 = e^{\pm \frac{2\pi i}{5}}$ \\ \hline
$\Re(\gamma_5)$ & $1/2$ & $1/2$ \\
$\Re(\gamma_5^2)$ & $\frac{\sqrt{5}}{10}$ & $-\frac{\sqrt{5}}{10}$ \\
$\Re(\gamma_5^3)$ & $\frac{3\sqrt{5}-5}{20}$ & $-\frac{5+3\sqrt{5}}{20}$ \\
$\Re(\gamma_5^4)$ & $\frac{\sqrt{5}-2}{10}$ & $-\frac{\sqrt{5}+2}{10}$ \\ \hline
\end{tabular}
\end{table}

\section{Functional Expansion in $n$ Dimensions}\label{Section:4}

In this section, we consider functions defined in the Euclidean space $\mathbb{R}^n$, with $\mathbf{x}=(x_1, \dots, x_n)$ denoting a point in $\mathbb{R}^n$. Let $f = f(\mathbf{x})$ be a smooth function in $\mathbb{R}^n$. For any integer $k \ge 0$, we introduce the following notation for the $k$-th order directional derivative:
\begin{equation}
    \nabla^k f : \mathbf{x}^k := \sum_{1 \le j_r \le n} x_{j_1} x_{j_2} \dots x_{j_k} \partial^k_{j_1, j_2, \dots, j_k} f.
\end{equation}
For simplicity, the multi-index summation symbol will be omitted in the subsequent derivations.

\begin{theorem}\label{thm:grad_identity}
For any $k \ge 0$ and $\mathbf{z}_0 \in \mathbb{C}^n$, the following identity holds:
\begin{equation}\label{eq:grad_step}
    (\mathbf{x}-\mathbf{z}_0) \cdot \nabla \bigl( \nabla^k f : (\mathbf{x}-\mathbf{z}_0)^k \bigr) = k \nabla^k f : (\mathbf{x}-\mathbf{z}_0)^k + \nabla^{k+1} f : (\mathbf{x}-\mathbf{z}_0)^{k+1}.
\end{equation}
\end{theorem}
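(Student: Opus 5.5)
The identity is a direct product-rule computation in which the derivative falls either on the polynomial weight or on $f$. I will set $\mathbf{y} = \mathbf{x}-\mathbf{z}_0$, so that $\partial_i y_j = \delta_{ij}$. Since $\mathbf{z}_0$ is constant, complex entries cause no difficulty: all differentiation is with respect to the real variable $\mathbf{x}$, and the weights are polynomials with complex coefficients. Write
\[
g_k := \nabla^k f : \mathbf{y}^k = \sum y_{j_1}\cdots y_{j_k}\, \partial^k_{j_1,\dots,j_k} f .
\]

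First I compute $\partial_i g_k$ by the product rule. Differentiating the monomial $y_{j_1}\cdots y_{j_k}$ gives $k$ terms. In the $r$-th term the factor $y_{j_r}$ is replaced by $\delta_{i j_r}$. Differentiating the derivative factor instead gives $y_{j_1}\cdots y_{j_k}\,\partial_i\partial^k_{j_1,\dots,j_k} f$.

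Next I contract with $y_i$ and sum over $i$. In the $r$-th monomial term, $\sum_i y_i \delta_{i j_r} = y_{j_r}$, so each of these terms reproduces $g_k$. There are $k$ such terms, giving $k\,g_k$. The remaining term is
\[
\sum y_i y_{j_1}\cdots y_{j_k}\, \partial^{k+1}_{i,j_1,\dots,j_k} f = \nabla^{k+1} f : \mathbf{y}^{k+1},
\]
which follows by relabeling $i$ as a new index $j_{k+1}$ and using the symmetry of mixed partials. This symmetry holds because $f$ is smooth, or at least $C^{k+1}$. Adding the two contributions yields \eqref{eq:grad_step}.

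The only point needing care is the bookkeeping of the multi-index sum in the first step. One must make sure that each of the $k$ product-rule terms, after contraction, gives exactly $g_k$ rather than a permuted expression. This again follows from the symmetry of $\partial^k f$ in its indices, although relabeling $j_r$ alone already suffices, since $\delta_{ij_r}$ followed by contraction restores $y_{j_r}$ in the same slot.

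Alternatively, one can observe that $g_k(\mathbf{x}) = \frac{d^k}{dt^k} f(\mathbf{z}_0 + t\mathbf{y})\big|_{t=1}$, which is formal when $\mathbf{z}_0$ is complex, and view $\mathbf{y}\cdot\nabla$ as the Euler operator. Homogeneity of degree $k$ in the weights then gives the $k g_k$ term directly. I would nonetheless present the index computation, since it avoids any issue with complex $\mathbf{z}_0$.
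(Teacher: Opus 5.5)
Your proposal is correct and follows the same route as the paper. Both apply the product rule to $\nabla^k f : (\mathbf{x}-\mathbf{z}_0)^k$, contract the $k$ Kronecker-delta terms against $\mathbf{x}-\mathbf{z}_0$ to recover $k\,\nabla^k f : (\mathbf{x}-\mathbf{z}_0)^k$, and relabel the new index to obtain $\nabla^{k+1} f : (\mathbf{x}-\mathbf{z}_0)^{k+1}$. Your direct use of $\mathbf{y}=\mathbf{x}-\mathbf{z}_0$, relying only on $\partial_i y_j=\delta_{ij}$, is slightly cleaner than the paper's ``without loss of generality $\mathbf{z}_0=0$'', since a complex shift cannot literally be removed by a change of real variables.
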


\begin{proof}
Without loss of generality, we prove the result for $\mathbf{z}_0 = 0$. By the definition of the gradient operator and the product rule, we have: 
\begin{equation*}
\begin{aligned}
   &  \mathbf{x} \cdot \nabla (\nabla^k f : \mathbf{x}^k) \\
= &\sum_{j_{k+1}=1}^n x_{j_{k+1}} \partial_{j_{k+1}}(\nabla^k f : \mathbf{x}^k) \\
    = & x_{j_1} \dots x_{j_{k+1}} \partial^{k+1}_{j_1, \dots, j_{k+1}} f + \sum_{j_{k+1}=1}^n x_{j_{k+1}}  \sum_{m=1}^k x_{j_1} \dots \delta_{j_m, j_{k+1}} \dots x_{j_k} \partial^k_{j_1, \dots, j_k} f\\
    = &  x_{j_1} \dots x_{j_{k+1}} \partial^{k+1}_{j_1, \dots, j_{k+1}} f + k  x_{j_1} \dots x_{j_k} \partial^k_{j_1, \dots, j_k} f \\
    = & k \nabla^k f : \mathbf{x}^k + \nabla^{k+1} f : \mathbf{x}^{k+1}.
\end{aligned}
\end{equation*}
\end{proof}

Next, we consider the divergence of the vector-valued function $\mathbf{x} (\nabla^k f : \mathbf{x}^k)$. Using the identity $\nabla \cdot (\mathbf{u} \phi) = \phi \nabla \cdot \mathbf{u} + \mathbf{u} \cdot \nabla \phi$ and equation \eqref{eq:grad_step}, we obtain:
\begin{equation*}
\begin{aligned}
    \nabla \cdot (\mathbf{x} \nabla^k f : \mathbf{x}^k) &= (\nabla \cdot \mathbf{x}) \nabla^k f : \mathbf{x}^k + \mathbf{x} \cdot \nabla (\nabla^k f : \mathbf{x}^k) \\
    &= n \nabla^k f : \mathbf{x}^k + k \nabla^k f : \mathbf{x}^k + \nabla^{k+1} f : \mathbf{x}^{k+1} \\
    &= (n+k) \nabla^k f : \mathbf{x}^k + \nabla^{k+1} f : \mathbf{x}^{k+1}.
\end{aligned}
\end{equation*}

\begin{theorem}
For any $k \ge 0$ and $\mathbf{z}_0 \in \mathbb{C}^n$, the recursive expansion identity is:
\begin{equation}\label{eq:div_recursion}
    \nabla^k f : (\mathbf{x}-\mathbf{z}_0)^k = \frac{1}{n+k} \nabla \cdot \bigl( (\mathbf{x}-\mathbf{z}_0) \nabla^k f : (\mathbf{x}-\mathbf{z}_0)^k \bigr) - \frac{1}{n+k} \nabla^{k+1} f : (\mathbf{x}-\mathbf{z}_0)^{k+1}.
\end{equation}
\end{theorem}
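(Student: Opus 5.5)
The plan is to compute the divergence of the vector field $(\mathbf{x}-\mathbf{z}_0)\,\nabla^k f:(\mathbf{x}-\mathbf{z}_0)^k$ directly and then rearrange, exactly as in the computation displayed just before the statement (which treated $\mathbf{z}_0=0$). Write $\mathbf{y}=\mathbf{x}-\mathbf{z}_0$ and $\phi=\nabla^k f:\mathbf{y}^k$. Differentiation is in $\mathbf{x}$, and $\mathbf{z}_0$ is a constant vector (possibly complex). Hence $\partial_j y_i=\delta_{ij}$, and every ordinary product and chain rule holds verbatim for the complex-valued polynomial factors. In particular $\nabla\cdot\mathbf{y}=n$.

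First, apply the product rule $\nabla\cdot(\mathbf{u}\phi)=\phi\,\nabla\cdot\mathbf{u}+\mathbf{u}\cdot\nabla\phi$ with $\mathbf{u}=\mathbf{y}$. This gives
\[
\nabla\cdot(\mathbf{y}\,\phi)=n\,\phi+\mathbf{y}\cdot\nabla\phi .
\]

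Second, invoke Theorem~\ref{thm:grad_identity}, which is stated for arbitrary $\mathbf{z}_0\in\mathbb{C}^n$. It yields
\[
\mathbf{y}\cdot\nabla\phi=k\,\phi+\nabla^{k+1}f:\mathbf{y}^{k+1}.
\]
Substituting, we get $\nabla\cdot(\mathbf{y}\,\phi)=(n+k)\,\phi+\nabla^{k+1}f:\mathbf{y}^{k+1}$.

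Third, since $n\ge 1$ and $k\ge 0$, we have $n+k>0$. Dividing by $n+k$ and moving the last term to the other side gives \eqref{eq:div_recursion}.

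The only point needing care is the reduction to $\mathbf{z}_0=0$ in Theorem~\ref{thm:grad_identity}, i.e.\ that the translation is legitimate when $\mathbf{z}_0$ is complex. Rather than translate, I would note that the index computation in that proof uses only $\partial_j(x_i-z_{0,i})=\delta_{ij}$. That computation therefore goes through unchanged with $x_i$ replaced by $y_i$, so no genuine obstacle arises. The whole argument is routine, and the smoothness needed is $f\in C^{k+1}$.
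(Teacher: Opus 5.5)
Your proposal is correct and follows the same route as the paper. The paper also expands $\nabla\cdot(\mathbf{x}\,\nabla^k f:\mathbf{x}^k) = n\,\nabla^k f:\mathbf{x}^k + \mathbf{x}\cdot\nabla(\nabla^k f:\mathbf{x}^k)$ by the product rule, applies Theorem~\ref{thm:grad_identity} to obtain $(n+k)\nabla^k f:\mathbf{x}^k + \nabla^{k+1}f:\mathbf{x}^{k+1}$, and rearranges; your remark that the index computation only uses $\partial_j(x_i - z_{0,i}) = \delta_{ij}$ justifies complex $\mathbf{z}_0$ more carefully than the paper's ``without loss of generality $\mathbf{z}_0=0$'' step.
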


Repeated application of \eqref{eq:div_recursion} yields the following functional expansion (illustrated for $\mathbf{z}_0 = 0$):
\begin{equation*}
\begin{aligned}
    f(\mathbf{x}) &= \frac{1}{n} \nabla \cdot (\mathbf{x} f) - \frac{1}{n} \nabla f : \mathbf{x} \\
    &= \frac{1}{n} \nabla \cdot (\mathbf{x} f) - \frac{1}{n(n+1)} \nabla \cdot (\mathbf{x} \nabla f : \mathbf{x}) + \frac{1}{n(n+1)} \nabla^2 f : \mathbf{x}^2 \\
    &= (n-1)! \sum_{k=0}^\infty \frac{(-1)^k}{(n+k)!} \nabla \cdot \bigl( \mathbf{x} \nabla^k f : \mathbf{x}^k \bigr).
\end{aligned}
\end{equation*}

The general result is summarized as follows:
\begin{theorem} \label{thm:JW_expansion}
Let $f \in C^{m+1}(\mathbb{R}^n)$ and $\mathbf{z}_0 \in \mathbb{C}^n$. Then:
\begin{equation}\label{eq:JW_finite}
\begin{split}
    \frac{f(\mathbf{x})}{(n-1)!} = & \sum_{k=0}^m \frac{(-1)^k}{(n+k)!} \nabla \cdot \bigl( (\mathbf{x}-\mathbf{z}_0) \nabla^k f : (\mathbf{x}-\mathbf{z}_0)^k \bigr)\\
& + \frac{(-1)^{m+1}}{(n+m)!} \nabla^{m+1} f : (\mathbf{x}-\mathbf{z}_0)^{m+1}.
\end{split}
\end{equation}
If $f \in C^\infty(\mathbb{R}^n)$, the infinite series expansion
\begin{equation}\label{eq:JW_series}
    \frac{f(\mathbf{x})}{(n-1)!} = \sum_{k=0}^\infty \frac{(-1)^k}{(n+k)!} \nabla \cdot \bigl( (\mathbf{x}-\mathbf{z}_0) \nabla^k f : (\mathbf{x}-\mathbf{z}_0)^k \bigr)
\end{equation}
holds, provided the series converges.
\end{theorem}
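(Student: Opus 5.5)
The plan is to prove \eqref{eq:JW_finite} by induction on $m$, using the recursion \eqref{eq:div_recursion} as the only ingredient. Write $\mathbf{y} = \mathbf{x}-\mathbf{z}_0$ and $T_k := \nabla^k f : \mathbf{y}^k$, so that $T_0 = f$. The recursion \eqref{eq:div_recursion} then reads
\[
T_k = \frac{1}{n+k}\nabla\cdot(\mathbf{y}\,T_k) - \frac{1}{n+k}T_{k+1}.
\]
This identity is valid whenever $f\in C^{k+1}$. Its derivation, via Theorem \ref{thm:grad_identity} and the product rule $\nabla\cdot(\mathbf{u}\phi)=\phi\nabla\cdot\mathbf{u}+\mathbf{u}\cdot\nabla\phi$, only requires $\nabla\cdot\mathbf{y}=n$ and $\partial_j y_i=\delta_{ij}$. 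Both hold for complex $\mathbf{z}_0$, since $\mathbf{z}_0$ is a constant vector.

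\textbf{Base case $m=0$.} Take $k=0$ and multiply by $1/(n-1)!$. Since $\frac{1}{(n-1)!\,n}=\frac{1}{n!}$, this gives
\[
\frac{f}{(n-1)!}=\frac{1}{n!}\nabla\cdot(\mathbf{y}f)-\frac{1}{n!}T_1,
\]
which is exactly \eqref{eq:JW_finite} with $m=0$.

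\textbf{Inductive step.} Assume \eqref{eq:JW_finite} holds for $m-1$ and that $f\in C^{m+1}$. The remainder in the $(m-1)$ identity is $\frac{(-1)^m}{(n+m-1)!}T_m$. Substitute the recursion with $k=m$ into it:
\[
\frac{(-1)^m}{(n+m-1)!}T_m=\frac{(-1)^m}{(n+m)!}\nabla\cdot(\mathbf{y}T_m)+\frac{(-1)^{m+1}}{(n+m)!}T_{m+1}.
\]
The first term is the $k=m$ summand of \eqref{eq:JW_finite}, and the second is the new remainder. This closes the induction.

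\textbf{Infinite series.} For $f\in C^\infty$, \eqref{eq:JW_finite} holds for every $m$. Read the statement's proviso, ``provided the series converges,'' as meaning the remainder $\frac{(-1)^{m+1}}{(n+m)!}T_{m+1}\to 0$. Then the partial sums converge to $f/(n-1)!$, which gives \eqref{eq:JW_series}. This last step is where care is needed. Convergence of the series alone does not force its sum to equal $f$; the remainder must also tend to zero. So I will state the hypothesis precisely as convergence of the remainder to zero. An alternative is to note that for analytic $f$ near $\mathbf{z}_0$, $T_{m+1}/(n+m)!$ is controlled by the Taylor coefficients, which tend to zero inside the radius of convergence.

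A secondary point to check is regularity. For $f\in C^{m+1}$, each flux $\mathbf{y}T_k$ with $k\le m$ is $C^1$, so its divergence is classical and the product-rule computation is justified. No other obstacles are expected.
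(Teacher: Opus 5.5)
Your proof is correct and is the paper's argument: the paper obtains \eqref{eq:JW_finite} by iterating \eqref{eq:div_recursion}, writing out the first two steps for $\mathbf{z}_0=\mathbf{0}$, and your induction makes that iteration and its factorial bookkeeping explicit. Your caveat on the infinite series is justified, because the $m$-th partial sum equals $f/(n-1)!$ minus the remainder, and convergence alone does not force the remainder to zero (if $(\mathbf{x}-\mathbf{z}_0)\cdot\nabla f=-nf$ on a neighborhood, every term of the series vanishes there although $f$ need not); for your analytic alternative, the relevant expansion is the Taylor series of $f$ about $\mathbf{x}$ evaluated at $\mathbf{z}_0$, not a series about $\mathbf{z}_0$, since the remainder equals $\frac{(m+1)!}{(n+m)!}$ times its degree-$(m+1)$ term.
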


\bigskip
The expansion in Theorem \ref{thm:JW_expansion} may be viewed as a divergence-form alternative to the classical Taylor expansion. While the standard Taylor formula represents $f(\mathbf{x})$ via pointwise derivatives evaluated at a fixed point $\mathbf{z}_0$, this expansion redistributes derivatives into divergence terms. This structure is particularly advantageous for applications involving integration by parts—such as weak formulations and finite element analysis—as the leading terms can be converted directly into boundary integrals via the divergence theorem.

\section{Functional Expansion on Flat Surfaces}\label{Section:5}
Let $\Sigma$ be an $m$-dimensional flat surface in $\mathbb{R}^n$, with $m \leqslant n$. The goal of this section is to extend the series expansion \eqref{eq:JW_finite}-\eqref{eq:JW_series} to functions defined on $\Sigma$ by utilizing the surface divergence operator.

For illustrative purposes, Figure \ref{FlatFaceSigma} depicts a two-dimensional surface $\Sigma$ in three-dimensional space. Although $\bx_0 \in \Sigma$ can be any point on the flat surface, it is often advantageous to select $\bx_0$ as the barycenter (or center) of $\Sigma$.

\begin{figure}[h!]
\centering
\begin{tikzpicture}[scale=0.7]
\coordinate (O) at (0,0); 
\coordinate (X1) at (5,0); 
\coordinate (X3) at (0,3); 
\coordinate (X2) at (-2,-2);
\coordinate (A0) at (1.7,-1.5);
\coordinate (A1) at (-0.8,-0.2);
\coordinate (A01) at (0.45,-0.85);
\coordinate (A02) at (2.85,-0.6);
\coordinate (A14) at (-0.05, 1.0);
\coordinate (A02V) at (3.315,-0.24);
\coordinate (A2) at (4, 0.3);
\coordinate (A3) at (4, 2.2);
\coordinate (A4) at (0.7, 2.2);
\coordinate (NX) at (3.45, -1.366);
\coordinate (NXL) at (-0.89, 1.45);
\coordinate (XCenter) at (2, 0.7);
\coordinate (X) at (1.5, -0.85);
\coordinate (Sigma) at (1, 1);

\draw[thick] (A0)--(A2)--(A3)--(A4)--(A1)--cycle; 
\draw[->, thick] (O)--(X1); 
\draw[->, thick] (O)--(X2); 
\draw[->, thick] (O)--(X3); 
\draw[->, thick, blue] (A14)--(NXL); 

\draw node[below] at (X) {$\bx$}; 
\draw node[right] at (X1) {$x_1$}; 
\draw node[above] at (X2) {$x_2$};
\draw node[left] at (X3) {$x_3$};
\draw node[below] at (A0) {$A_0$}; 
\draw node[left] at (NXL) {$\bn_x$};
\draw node[above, red] at (A01) {$\vec{v}_1$};
\draw node[above, red] at (A02V) {$\vec{v}_2$};
\draw node[right] at (A2) {$A_2$}; 
\draw node[left] at (A1) {$A_1$};
\draw node[above] at (A4) {$A_4$};
\draw node[above] at (A3) {$A_3$};
\draw node[above] at (XCenter) {$\bx_0$};
\draw node[below] at (Sigma) {$\Sigma$};

\draw[->, red, thick] (A0)--(A1); 
\draw[->, red, thick] (A0)--(A2);
\draw[->, blue, thick] (XCenter)--(X);

\filldraw[black] (A0) circle(0.05);
\filldraw[black] (A1) circle(0.05);
\filldraw[black] (A2) circle(0.05);
\filldraw[black] (XCenter) circle(0.05);
\filldraw[black] (A3) circle(0.05);
\filldraw[black] (A4) circle(0.05);
\end{tikzpicture}
\caption{Depiction of a flat face $\Sigma \subset \partial T$.}
\label{FlatFaceSigma}
\end{figure}
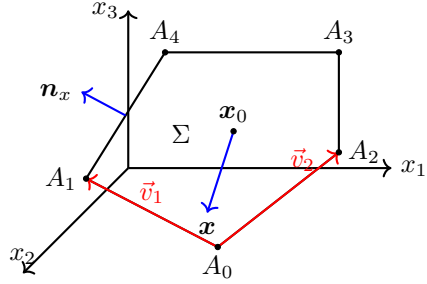

For any point $\bx \in \Sigma$, the vector from $\bx_0$ to $\bx$ can be expressed as a linear combination of the basis vectors of the hyperplane containing $\Sigma$. Specifically, let
\[
\vec{v}_1 = A_1 - A_0, \quad \vec{v}_2 = A_2 - A_0.
\]
Then there exists a column vector $\bs = (s_1, s_2)^\top \in \mathbb{R}^2$ such that
\begin{equation}\label{EQ_affine_map_2D}
\bx - \bx_0 = s_1 \vec{v}_1 + s_2 \vec{v}_2 = A \bs, \quad \bs \in \Sigma_s \subset \mathbb{R}^2,
\end{equation}
where $A = [\vec{v}_1, \vec{v}_2]$. The mapping in \eqref{EQ_affine_map_2D} defines an affine transformation from the two-dimensional reference domain $\Sigma_s$ to the flat face $\Sigma$:
\begin{equation}\label{AffineMap1}
\bs \to F(\bs) := \bx = \bx_0 + A \bs, \quad \bs \in \Sigma_s.
\end{equation}
Note that in the illustrative example of Figure \ref{FlatFaceSigma}, $A$ is a $3 \times 2$ matrix.

To generalize to an $m$-dimensional surface $\Sigma \subset \mathbb{R}^n$, the affine map \eqref{AffineMap1} naturally extends to involve an $n \times m$ matrix $A = [\vec{v}_1, \vec{v}_2, \dots, \vec{v}_m] = \{a_{ij}\}_{n \times m}$ and a reference domain $\Sigma_s \subset \mathbb{R}^m$.

Let $\phi = \phi(\bx)$ be a smooth function defined on $\mathbb{R}^n$. Its restriction to $\Sigma$ defines a surface function. We now derive a local expansion of this function using the surface divergence operator. Through the affine map \eqref{AffineMap1}, $\phi$ induces a corresponding function on $\Sigma_s$:
\begin{equation}\label{EQ_SurfaceFunctionNew}
\tilde\phi(\bs) := \phi(\bx(\bs)) = \phi(\bx_0 + A \bs), \quad \bs \in \Sigma_s.
\end{equation}
We next compute $\nabla_s \cdot (\tilde\phi \bs)$. From the chain rule, we have
\begin{equation}\label{EQ_1022_001}
\partial_{s_j} \tilde\phi = \sum_{i=1}^n \partial_{x_i} \phi \frac{\partial x_i}{\partial s_j} = \sum_{i=1}^n \partial_{x_i} \phi \, a_{ij} = \vec{v}_j \cdot \nabla_x \phi.
\end{equation}
Therefore, it follows that
\begin{equation}\label{EQ_1022_003}
\begin{aligned}
\nabla_s \cdot (\bs \tilde\phi) &= \sum_{j=1}^m \partial_{s_j}(s_j \tilde\phi) \\
&= m\tilde\phi + \sum_{j=1}^m s_j \partial_{s_j} \tilde\phi \\
&= m\tilde\phi + \sum_{j=1}^m s_j (\vec{v}_j \cdot \nabla_x \phi) \\
&= m\tilde\phi + (A\bs)^\top \nabla_x \phi \\
&= m\tilde\phi + (\bx - \bx_0)^\top \nabla_x \phi \\
&= m\tilde\phi + \nabla_x \phi : (\bx - \bx_0).
\end{aligned}
\end{equation}

To further expand the term $\nabla_x \phi : (\bx - \bx_0)$, we replace $\phi$ in \eqref{eq:grad_step} with $\nabla_x \phi : (\bx - \bx_0)$ to obtain
\begin{equation}\label{EQ_1022_004}
\nabla_s \cdot ( \bs \nabla_x \phi : (\bx - \bx_0) ) = m \nabla_x \phi : (\bx - \bx_0) + \nabla_x(\nabla_x \phi : (\bx - \bx_0)) : (\bx - \bx_0).
\end{equation}
Next, we evaluate the term $\nabla_x(\nabla_x \phi : (\bx - \bx_0)) : (\bx - \bx_0)$. Since the differentiation is taken in $\mathbb{R}^n$ with respect to $\bx$, it follows from \eqref{eq:grad_step} (with $\bz_0 = \bx_0$ and $m=1$) that
\[
\nabla_x(\nabla_x \phi : (\bx - \bx_0)) : (\bx - \bx_0) = \nabla_x^2 \phi : (\bx - \bx_0)^2 + \nabla_x \phi : (\bx - \bx_0).
\]
Substituting this relation into \eqref{EQ_1022_004} yields
\begin{equation}\label{EQ_1022_005}
\nabla_s \cdot ( \bs \nabla_x \phi : (\bx - \bx_0) ) = (m+1) \nabla_x \phi : (\bx - \bx_0) + \nabla_x^2 \phi : (\bx - \bx_0)^2.
\end{equation}

\begin{lemma}
For any integer $k \geqslant 0$, the identity \eqref{EQ_1022_003} admits the following higher-order extension:
\begin{equation}\label{EQ_1022_006}
\nabla_s \cdot ( \bs \nabla_x^k \phi : (\bx - \bx_0)^k ) = m \nabla_x^k \phi : (\bx - \bx_0)^k + \nabla_x(\nabla_x^k \phi : (\bx - \bx_0)^k) : (\bx - \bx_0).
\end{equation}
Combining \eqref{EQ_1022_006} with the identity in Theorem \ref{thm:grad_identity} results in
\begin{equation}\label{EQ_1022_006_new}
\nabla_s \cdot ( \bs \nabla_x^k \phi : (\bx - \bx_0)^k ) = (k+m) \nabla_x^k \phi : (\bx - \bx_0)^k + \nabla_x^{k+1} \phi : (\bx - \bx_0)^{k+1}.
\end{equation}
\end{lemma}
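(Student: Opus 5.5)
The plan is to treat the lemma as a direct generalization of the computation \eqref{EQ_1022_003}, with $\tilde\phi$ replaced by the surface function induced by $\psi_k(\bx) := \nabla_x^k \phi : (\bx-\bx_0)^k$. The key observation is that $\psi_k$ is itself a smooth function on $\mathbb{R}^n$. Therefore nothing in the derivation of \eqref{EQ_1022_003} depended on the specific form of $\phi$.

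First, I will set $\tilde\psi_k(\bs) := \psi_k(\bx_0 + A\bs)$ and interpret the left-hand side of \eqref{EQ_1022_006} as $\nabla_s\cdot(\bs\,\tilde\psi_k)$. Using the product rule,
\[
\nabla_s\cdot(\bs\tilde\psi_k) = m\tilde\psi_k + \sum_{j=1}^m s_j\,\partial_{s_j}\tilde\psi_k .
\]
Next, I apply the chain rule \eqref{EQ_1022_001} to $\psi_k$, which gives $\partial_{s_j}\tilde\psi_k = \vec v_j\cdot\nabla_x\psi_k$. Summing over $j$ then produces $(A\bs)^\top\nabla_x\psi_k = (\bx-\bx_0)\cdot\nabla_x\psi_k$, which is exactly $\nabla_x(\nabla_x^k\phi:(\bx-\bx_0)^k):(\bx-\bx_0)$. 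This establishes \eqref{EQ_1022_006}.

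For \eqref{EQ_1022_006_new}, I invoke Theorem \ref{thm:grad_identity} with $f=\phi$ and $\mathbf{z}_0=\bx_0$. Since that identity is a pointwise identity in $\mathbb{R}^n$, it holds in particular at points $\bx\in\Sigma$. It gives
\[
(\bx-\bx_0)\cdot\nabla_x\psi_k = k\,\psi_k + \nabla_x^{k+1}\phi:(\bx-\bx_0)^{k+1}.
\]
Substituting this into \eqref{EQ_1022_006} and collecting the $\psi_k$ terms yields the coefficient $k+m$, as claimed. The case $k=0$ reduces to \eqref{EQ_1022_003}, and the case $k=1$ reproduces \eqref{EQ_1022_005}, which serves as a consistency check.

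The only delicate point is conceptual rather than computational. The gradients are ambient gradients in $\bx$, while the divergence is taken in $\bs$. The identification $\bx-\bx_0=A\bs$ is valid only on $\Sigma$, so I must be careful that the $x$-derivatives are evaluated in $\mathbb{R}^n$ first and only then restricted to $\Sigma$. The chain rule step handles this correctly because $\psi_k$ is defined and differentiable on all of $\mathbb{R}^n$, with $\bx_0$ fixed. I would also note that this requires $\phi\in C^{k+1}$, so that $\psi_k\in C^1$.
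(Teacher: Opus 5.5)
Your proposal is correct and takes the same route as the paper: rerun the product-rule/chain-rule computation \eqref{EQ_1022_003} with the ambient function $\nabla_x^k\phi:(\bx-\bx_0)^k$ in place of $\phi$, then apply Theorem~\ref{thm:grad_identity} with $\mathbf{z}_0=\bx_0$ to obtain the coefficient $k+m$, which is how the paper itself passes from \eqref{EQ_1022_004} to \eqref{EQ_1022_005} in the case $k=1$. Your two side remarks are accurate and make explicit what the paper leaves implicit: the $\bx$-derivatives are taken in $\mathbb{R}^n$ before restricting to $\Sigma$, and the argument needs $\phi\in C^{k+1}$.
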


By recursively applying \eqref{EQ_1022_006_new}, one obtains an expansion of the surface function $\tilde\phi$ on the flat surface $\Sigma$.

\begin{theorem}\label{Thm_SurfaceExpansion}
Let $\tilde\phi$ be a smooth surface function defined on an $m$-dimensional surface $\Sigma \subset \mathbb{R}^n$ through the affine mapping \eqref{EQ_SurfaceFunctionNew}. Then, for any integer $\ell \geqslant 0$, the following identity holds:
\begin{equation}\label{EQ_1022_008}
\frac{\tilde\phi(\bs)}{(m-1)!} = \sum_{k=0}^\ell \frac{(-1)^k}{(k+m)!} \nabla_s \cdot (\bs \nabla_x^k \phi : (\bx - \bx_0)^k) + \frac{(-1)^{\ell+1}}{(\ell+m)!} \nabla_x^{\ell+1} \phi : (\bx - \bx_0)^{\ell+1}.
\end{equation}
Moreover, if $\phi \in C^\infty(\mathbb{R}^n)$, then
\begin{equation}\label{EQ_1022_019}
\frac{\tilde\phi(\bs)}{(m-1)!} = \sum_{k=0}^\infty \frac{(-1)^k}{(k+m)!} \nabla_s \cdot (\bs \nabla_x^k \phi : (\bx - \bx_0)^k),
\end{equation}
provided that the series converges.
\end{theorem}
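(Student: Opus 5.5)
The plan is to prove the finite identity \eqref{EQ_1022_008} by induction on $\ell$, using the one-step relation \eqref{EQ_1022_006_new} of the preceding lemma in exactly the way \eqref{eq:div_recursion} was used to obtain Theorem \ref{thm:JW_expansion}.

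Write $T_k := \nabla_x^k \phi : (\bx-\bx_0)^k$, viewed as a function of $\bs$ through $\bx = \bx_0 + A\bs$. Note that $T_0 = \tilde\phi$. Rearranging \eqref{EQ_1022_006_new} gives
\[
T_k = \frac{1}{k+m}\,\nabla_s\cdot(\bs\, T_k) - \frac{1}{k+m}\, T_{k+1}.
\]

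For the base case $\ell = 0$, take $k=0$ and divide by $(m-1)!$. This gives
\[
\frac{\tilde\phi}{(m-1)!} = \frac{1}{m!}\nabla_s\cdot(\bs\tilde\phi) - \frac{1}{m!}T_1,
\]
which is \eqref{EQ_1022_008} with $\ell=0$.

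For the inductive step, assume \eqref{EQ_1022_008} holds for $\ell$. Substitute the rearranged recursion with $k=\ell+1$ into the remainder $\frac{(-1)^{\ell+1}}{(\ell+m)!}T_{\ell+1}$. Since $(\ell+m)!\,(\ell+1+m) = (\ell+1+m)!$, the remainder becomes
\[
\frac{(-1)^{\ell+1}}{(\ell+1+m)!}\nabla_s\cdot(\bs T_{\ell+1}) + \frac{(-1)^{\ell+2}}{(\ell+1+m)!}T_{\ell+2}.
\]
The first piece is the $(\ell+1)$-st summand and the second is the new remainder. This closes the induction.

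The infinite expansion \eqref{EQ_1022_019} then follows for $\phi\in C^\infty$ by letting $\ell\to\infty$. Under the stated hypothesis that the series converges, the partial sums converge, so the remainders converge as well. The remainder tends to zero exactly when the series sums to $\tilde\phi/(m-1)!$, and I would read ``provided that the series converges'' in that sense, just as the paper does in Theorem \ref{thm:JW_expansion}.

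The only substantive point is the lemma itself, which I take as given. If it needed justification, the main obstacle is \eqref{EQ_1022_006}: it is the computation \eqref{EQ_1022_003} applied with $\phi$ replaced by the smooth function $\psi = T_k$ on $\mathbb{R}^n$. The chain rule gives $\partial_{s_j}\psi = \vec v_j\cdot\nabla_x\psi$, and $\sum_j s_j \vec v_j = A\bs = \bx-\bx_0$ turns the sum into $\nabla_x\psi : (\bx-\bx_0)$. Theorem \ref{thm:grad_identity} with $\bz_0=\bx_0$ then converts $\nabla_x T_k : (\bx-\bx_0)$ into $kT_k + T_{k+1}$. Everything else is bookkeeping of factorials and signs.
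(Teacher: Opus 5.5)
Your proof is correct and follows the paper's argument. The paper obtains \eqref{EQ_1022_008} simply ``by recursively applying \eqref{EQ_1022_006_new},'' and your induction on $\ell$, using $(\ell+m)!\,(\ell+1+m)=(\ell+1+m)!$, is the formal version of that recursion. Your caveat on \eqref{EQ_1022_019} is sharper than the paper's wording: convergence of the series only forces the remainder $\frac{(-1)^{\ell+1}}{(\ell+m)!}\nabla_x^{\ell+1}\phi:(\bx-\bx_0)^{\ell+1}$ to have a limit, and the identity holds exactly when that limit is zero.
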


Theorem \ref{Thm_SurfaceExpansion} provides a surface-based functional expansion for the restriction of a smooth function $\phi$ onto a flat $m$-dimensional surface $\Sigma$. It expresses $\tilde\phi$ in terms of repeated applications of the surface divergence operator acting on combinations of $\phi$ and its spatial derivatives. Conceptually, this result is the surface analog of the functional expansion in Euclidean space $\mathbb{R}^n$ shown in Theorem \ref{thm:JW_expansion}. Here, the role of spatial derivatives and powers of $(\bx - \bz_0)$ is replaced by surface divergence and $(\bx - \bx_0)$, naturally respecting the geometry of $\Sigma$. Each successive term captures higher-order variations of $\phi$ along the surface directions, providing a systematic framework for constructing high-order surface approximations and integral identities.

\section{Numerical Integration in Multidimensional Domains}\label{Section:6}

Let $T$ be an $n$-dimensional polytope in $\mathbb{R}^n$. Integrating the expansion \eqref{eq:JW_series} over $T$ and applying the divergence theorem term by term yields:
\begin{equation}\label{JW_Expansion_Integral}
\begin{aligned}
\int_T \frac{f(\bx)}{(n-1)!} \, \dT &= \sum_{k=0}^\infty \frac{(-1)^k}{(n+k)!} \int_T \nabla \cdot \left( (\bx-\bz_0) \nabla^k f : (\bx-\bz_0)^k \right) \, \dT \\
&= \sum_{k=0}^\infty \frac{(-1)^k}{(n+k)!} \int_{\partial T} \left( \nabla^k f : (\bx-\bz_0)^k \right) (\bx-\bz_0) \cdot \bn \, \dS,
\end{aligned}
\end{equation}
where $\bn$ denotes the unit outward normal vector on the boundary $\partial T$.

Truncating the series \eqref{JW_Expansion_Integral} after the first $m+1$ terms yields the following approximation for the integral:
\begin{equation}\label{JW_Expansion_Integral_app}
\int_T \frac{f(\bx)}{(n-1)!} \, \dT \approx I_{m+1} := \sum_{k=0}^m \frac{(-1)^k}{(n+k)!} \int_{\partial T} \left( \nabla^k f : (\bx-\bz_0)^k \right) (\bx-\bz_0) \cdot \bn \, \dS.
\end{equation}
From the identity \eqref{eq:JW_finite}, the corresponding remainder (or truncation error) is given by:
\begin{equation}\label{JW_Expansion_Integral_Remainder}
R_{m+1} = \frac{(-1)^{m+1}}{(n+m)!} \int_T \nabla^{m+1} f : (\bx-\bz_0)^{m+1} \, \dT.
\end{equation}

The lowest-order approximation occurs when $m=0$ in \eqref{JW_Expansion_Integral_app}, yielding:
\begin{equation}\label{JW_Expansion_Integral_app_lowestorder}
\int_T f(\bx) \, \dT \approx I_{1} := \frac{1}{n} \int_{\partial T} f(\bx) \left[ (\bx-\bz_0) \cdot \bn \right] \, \dS,
\end{equation}
with the associated remainder:
\begin{equation}\label{JW_Expansion_Integral_Remainder_lowestorder}
E_{1} = -\frac{1}{n} \int_T \nabla f \cdot (\bx-\bz_0) \, \dT.
\end{equation}

For a polytope where the boundary $\partial T$ consists of a finite set of flat faces $\{F_i\}_{i=1}^F$, each with a constant outward unit normal $\bn_i$ and a constant value of $(\bx - \bz_0) \cdot \bn_i$, the surface integral in \eqref{JW_Expansion_Integral_app} simplifies to:
\begin{equation}\label{eq:surfaceintegral}
\int_{\partial T} \left( \nabla^k f : (\bx-\bz_0)^k \right) (\bx-\bz_0) \cdot \bn \, \dS =  \sum_{i=1}^F (\bx_i - \bz_0) \cdot \bn_i \int_{F_i} \nabla^k f : (\bx-\bz_0)^k \, \dS,
\end{equation}
where $\bx_i \in F_i$ is an arbitrary point on the face.

\subsection{Volume Calculation}
The volume (or $n$-dimensional measure) of the polytope $T$, denoted by $|T|$, can be computed exactly by setting $f(\bx) = 1$ in the formula \eqref{JW_Expansion_Integral_app_lowestorder}. Since $f$ is constant, $\nabla f = \mathbf{0}$, and the remainder term $E_1$ vanishes. Thus, the volume is given by:
\begin{equation}\label{EQ_Volume_Formula}
|T| = \int_T 1 \, \dT = \frac{1}{n} \int_{\partial T} (\bx - \bz_0) \cdot \bn \, \dS.
\end{equation}
Applying \eqref{eq:surfaceintegral} with $k=0$ to the surface integral in \eqref{EQ_Volume_Formula} yields:
\begin{equation}
|T| = \frac{1}{n} \sum_{i=1}^F (\bx_i - \bz_0) \cdot \bn_i  |F_i|,
\end{equation}
where $\bx_i \in F_i$ is an arbitrary point on the face, and $|F_i|$ is the $(n-1)$-dimensional measure of face $F_i$. If $\bz_0$ is chosen such that it lies on the hyperplane containing one or more faces, the corresponding terms in the summation vanish, further simplifying the calculation.

\subsection{Center of Mass Calculation}
The center of mass (or centroid) $\bar{\bx}_T$ of the polytope $T$ is defined as:
\begin{equation}
\bar{\bx}_T = \frac{1}{|T|} \int_T \bx \, \dT.
\end{equation}
To evaluate the integral of the coordinate function $f(\bx) = \bx$, we note that $\nabla \bx = \mathbb{I}$, where $\mathbb{I}$ is the $n \times n$ identity tensor. Substituting $f(\bx) = \bx$ into the identity \eqref{JW_Expansion_Integral_app_lowestorder} and accounting for the remainder term \eqref{JW_Expansion_Integral_Remainder_lowestorder}, we have:
\begin{equation}
\int_T \bx \, \dT = \frac{1}{n} \int_{\partial T} \bx \left[ (\bx - \bz_0) \cdot \bn \right] \, \dS - \frac{1}{n} \int_T \mathbb{I} \cdot (\bx - \bz_0) \, \dT.
\end{equation}
Expanding the remainder term (the last integral), we obtain:
\begin{equation}
\int_T \bx \, \dT = \frac{1}{n} \int_{\partial T} \bx \left[ (\bx - \bz_0) \cdot \bn \right] \, \dS - \frac{1}{n} \int_T \bx \, \dT + \frac{1}{n} \int_T \bz_0 \, \dT.
\end{equation}
Rearranging the terms and noting that $\int_T \bz_0 \, \dT = \bz_0 |T|$, we arrive at the following boundary integral formula for the centroid:
\begin{equation}\label{EQ_Centroid_Boundary}
\bar{\bx}_T = \frac{1}{(n+1)|T|} \left( \int_{\partial T} \bx \left[ (\bx - \bz_0) \cdot \bn \right] \, \dS + \bz_0 |T| \right).
\end{equation}
By setting $\bz_0=\mathbf{0}$, this simplifies to:
\begin{equation}\label{EQ_Centroid_Boundary_2}
\bar{\bx}_T = \frac{1}{(n+1)|T|} \int_{\partial T} \bx (\bx\cdot \bn) \, \dS.
\end{equation}
Notice that the scalar product $\bx\cdot\bn$ is constant on each flat face $F_i$. Hence, we can pull it out of the integral for each face:
\begin{equation}\label{EQ_Centroid_Boundary_3}
\bar{\bx}_T = \frac{1}{(n+1)|T|}  \sum_{i=1}^F (\bx_i\cdot\bn_i)  \int_{F_i} \bx \, \dS,
\end{equation}
where $\bx_i$ is any point on $F_i$. Let $\bx_{F_i}$ be the barycenter of the face $F_i \subset \partial T$, defined such that $\int_{F_i} \bx \, \dS = |F_i| \bx_{F_i}$. The centroid formula can then be written as:
\begin{equation}\label{EQ_Centroid_Boundary_4}
\bar{\bx}_T = \frac{1}{(n+1)|T|}  \sum_{i=1}^F (\bx_{F_i}\cdot\bn_i) \bx_{F_i}  |F_i|.
\end{equation}
This formula determines the $n$-dimensional center of mass purely through $(n-1)$-dimensional boundary data, consistent with the reduction-of-dimension strategy developed in the previous sections.

\subsection{Integration of Polynomials}
The numerical integration formula \eqref{JW_Expansion_Integral_app} is particularly powerful when $f(\bx)$ is a polynomial. Let $f(\bx) = (\bx - \bz_0)^\alpha$ be a multivariate monomial of degree $|\alpha|$. 
Using Euler's homogeneous function theorem, we note that $\nabla f \cdot (\bx-\bz_0) = |\alpha| f(\bx)$. Substituting this into $I_1$ in \eqref{JW_Expansion_Integral_app_lowestorder} and accounting for the remainder term \eqref{JW_Expansion_Integral_Remainder_lowestorder}, we have:
\begin{equation}
\int_T (\bx-\bz_0)^\alpha \, \dT = \frac{1}{n} \int_{\partial T} (\bx-\bz_0)^\alpha \left[ (\bx - \bz_0) \cdot \bn \right] \, \dS - \frac{|\alpha|}{n} \int_T  (\bx - \bz_0)^\alpha \, \dT.
\end{equation}
Moving the volume integral term to the left-hand side yields:
\begin{equation*}
\int_T (\bx-\bz_0)^\alpha \, \dT = \frac{1}{n+|\alpha|} \int_{\partial T} (\bx-\bz_0)^\alpha \left[ (\bx - \bz_0) \cdot \bn \right] \, \dS.
\end{equation*}
Again, noting that $(\bx - \bz_0) \cdot \bn$ is constant on each face $F_i$, we have:
\begin{equation}\label{eq:poly_integral}
\int_T (\bx-\bz_0)^\alpha \, \dT = \frac{1}{n+|\alpha|} \sum_{i=1}^F  \left[ (\bx_{F_i} - \bz_0) \cdot \bn_i \right]\int_{F_i} (\bx-\bz_0)^\alpha \, \dS.
\end{equation}
This identity demonstrates that the volume integral of a polynomial of degree $|\alpha|$ reduces to a sum of boundary integrals of the same degree. By applying this result recursively, integration over an $n$-dimensional polytope $T$ is eventually reduced to evaluations at the vertices (0-dimensional boundaries).

\subsection{Recursive Trapezoidal Rule in Higher Dimensions}
Using the lowest-order approximation $I_1$ from \eqref{JW_Expansion_Integral_app_lowestorder}, we can derive a systematic numerical quadrature. Observe that the remainder term $E_1$ in \eqref{JW_Expansion_Integral_Remainder_lowestorder} vanishes for any linear function $f$ if $\bz_0$ is chosen as the centroid $\bar{\bx}_T$ of the polytope $T$. Therefore, the lowest-order approximation $I_1$ is better expressed by:
\begin{equation}\label{JW_Expansion_Integral_app_lowestorder_1}
\int_T f(\bx) \, \dT \approx J_{1} := \frac{1}{n} \int_{\partial T} f(\bx) \left[ (\bx-\bar{\bx}_T) \cdot \bn \right] \, \dS,
\end{equation}
with the associated remainder:
\begin{equation}\label{JW_Expansion_Integral_Remainder_lowestorder-1}
E_{1} = -\frac{1}{n} \int_T \nabla f \cdot (\bx-\bar{\bx}_T) \, \dT.
\end{equation}

Approximating the boundary integral over each face $F_i \subset \partial T$ using the function value at its barycenter $\bx_{F_i}$ yields:
\begin{equation}
\int_T f(\bx) \, \dT \approx \frac{1}{n} \sum_{i=1}^F |F_i| f(\bx_{F_i}) [(\bx_{F_i} - \bar{\bx}_T) \cdot \bn_i].
\end{equation}
This formulation can be interpreted as a multidimensional trapezoidal rule. 

To improve accuracy, one can apply the higher-order terms from \eqref{JW_Expansion_Integral_app}. The general framework for a $k$-th order trapezoidal rule on a polytope involves:
\begin{enumerate}
    \item Decomposing the boundary $\partial T$ into its $(n-1)$-dimensional facets.
    \item Applying the surface-based functional expansion \eqref{EQ_1022_008} to each facet.
    \item Recursively reducing the dimension until the underlying integrals are expressed as weighted sums of function values (and derivatives) at the geometric centroids of the $m$-dimensional faces ($0 \le m < n$).
\end{enumerate}
This approach naturally respects the polyhedral geometry and avoids the need for complex internal triangulation (e.g., tetrahedralization) of the domain $T$.

\subsection{Integration on Flat Surfaces}
The surface integral on $F_i$ in \eqref{eq:surfaceintegral} can be further simplified by applying the functional expansion \eqref{EQ_1022_008} to each flat surface $F_i$. 

Consider a general $m$-dimensional flat surface $\Sigma \subset \mathbb{R}^n$ (with $m \le n$) and let $\tilde{\phi}$ be a smooth surface function defined on $\Sigma$ through the affine mapping \eqref{EQ_SurfaceFunctionNew}. For example, in the context of \eqref{eq:surfaceintegral}, the function $\tilde{\phi}$ corresponds to the restriction of $\phi = \nabla^k f : (\bx-\bz_0)^k$ to the surface $F_i$.  

Fix any given reference point $\bx_0 \in \Sigma$. Using the expansion \eqref{EQ_1022_008} for $\tilde{\phi}$ about $\bx_0$, we integrate both sides over the reference domain $\Sigma_s$ to obtain:
\begin{equation}\label{EQ_1022_009}
\begin{aligned}
\int_{\Sigma_s } \frac{\tilde{\phi}(\bs)}{(m-1)!} \, d\Sigma_s &= \sum_{k=0}^\infty \frac{(-1)^k}{(k+m)!}  \int_{\Sigma_s} \nabla_s \cdot \left( \bs \nabla_x^k\phi : (\bx-\bx_0)^k \right) \, d\Sigma_s \\
&= \sum_{k=0}^\infty \frac{(-1)^k}{(k+m)!}  \int_{\partial\Sigma_s} \left( \nabla_x^k\phi : (\bx-\bx_0)^k \right) \bs \cdot \bn_s \, d(\partial\Sigma_s),
\end{aligned}
\end{equation}
where $\bn_s$ denotes the unit outward normal vector on $\partial\Sigma_s$.

Equation \eqref{EQ_1022_009} follows directly from the surface divergence theorem, which converts the integral of a surface divergence over $\Sigma_s$ into a boundary integral over $\partial\Sigma_s$. This transformation expresses the surface integral of $\tilde{\phi}$ as a series of boundary contributions involving the tensor derivatives $\nabla_x^k\phi$ and the geometric moments $(\bx-\bx_0)^k$. Such a representation often simplifies computation, particularly when the boundary $\partial\Sigma_s$ has a simple geometric structure, as in polygonal or polyhedral cases. The remainder of this subsection is devoted to simplifying the following boundary integrals:
\begin{equation*}
\int_{\partial\Sigma_s} \left( \nabla_x^k\phi : (\bx-\bx_0)^k \right) \bs \cdot \bn_s \, d(\partial\Sigma_s).
\end{equation*}

Recalling the affine map from $\Sigma_s$ to $\Sigma$:
\begin{equation}\label{MapFromStoX}
\bx-\bx_0 = A \bs \quad \Longrightarrow \quad \bs = (A^\top A)^{-1} A^\top (\bx-\bx_0),
\end{equation}
and substituting this into \eqref{EQ_1022_009} gives:
\begin{equation}\label{EQ_1022_010}
\begin{aligned}
&\frac{1}{(m-1)!} \int_{\Sigma_s} \tilde{\phi}(\bs) \, d\Sigma_s \\ 
&= \sum_{k=0}^\infty \frac{(-1)^k}{(k+m)!} \int_{\partial\Sigma_s} \left( \nabla_x^k\phi : (\bx-\bx_0)^k \right) (A^\top A)^{-1} A^\top (\bx-\bx_0) \cdot \bn_s \, d(\partial\Sigma_s).
\end{aligned}
\end{equation}

For simplicity of notation, we retain only the first one or two terms of the series:
\begin{equation}\label{EQ_1022_011}
\int_{\Sigma_s } \tilde{\phi}(\bs) \, d\Sigma_s 
= \frac{(m-1)!}{m!} \int_{\partial\Sigma_s} \phi \left[ (A^\top A)^{-1} A^\top (\bx-\bx_0) \cdot \bn_s \right] \, d(\partial\Sigma_s) + \cdots
\end{equation}

We now map the integral in \eqref{EQ_1022_011} back to $\Sigma$. Using the affine transformation \eqref{MapFromStoX} and the change of variables formula, we have:
\begin{equation}\label{EQ_1022_012}
\begin{aligned}
\int_{\Sigma} \phi \, d\Sigma 
&= \frac{1}{m} \frac{|\Sigma|}{|\Sigma_s|} \int_{\partial\Sigma} \phi \left[ (A^\top A)^{-1} A^\top (\bx-\bx_0) \cdot \bn_s \right] \frac{|\partial\Sigma_s|}{|\partial\Sigma|} \, d(\partial\Sigma) + \cdots \\
&= \frac{1}{m} \frac{|\Sigma|}{|\Sigma_s|} \int_{\partial\Sigma} \phi \left[ (\bx-\bx_0) \cdot \left( A (A^\top A)^{-1} \bn_s \right) \right] \frac{|\partial\Sigma_s|}{|\partial\Sigma|} \, d(\partial\Sigma) + \cdots
\end{aligned}
\end{equation}

\begin{theorem}
Let $\Sigma$ be the image of $\Sigma_s$ under the affine map defined by $A$. The following geometric identities hold:
\begin{equation}\label{EQ_1023_001}
\frac{|\Sigma|}{|\Sigma_s|} = \sqrt{\det(A^\top A)},
\end{equation}
and on each flat portion of $\partial\Sigma$, the vector
\begin{equation}\label{EQ_1023_002}
\bn_x = A (A^\top A)^{-1} \bn_s \frac{|\partial\Sigma_s|}{|\partial\Sigma|} \frac{|\Sigma|}{|\Sigma_s|}
\end{equation}
is a unit vector that is outward normal to $\Sigma$ on $\partial \Sigma$ (see Figure \ref{FlatFaceSigma}).
\end{theorem}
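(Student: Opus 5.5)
We will prove both identities from the single fact that $F(\bs)=\bx_0+A\bs$ is an affine bijection from $\Sigma_s\subset\mathbb{R}^m$ onto $\Sigma$ with constant Jacobian $A$ (of full column rank $m$), so that $G:=A^\top A$ is symmetric positive definite.

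\textbf{Step 1 (volume ratio).} The $m$-dimensional measure on $\Sigma$ pulls back under $F$ to $\sqrt{\det(DF^\top DF)}\,d\bs=\sqrt{\det G}\,d\bs$; this is the Gram determinant, i.e.\ the $m$-volume of the parallelepiped spanned by $\vec v_1,\dots,\vec v_m$ (the fact deferred to Section~\ref{Section:7}). To avoid quoting the area formula, choose an orthonormal basis $Q\in\mathbb{R}^{n\times m}$ of the column space of $A$ and write $A=QR$ with $R\in\mathbb{R}^{m\times m}$ invertible. Then $Q$ is an isometry onto the hyperplane of $\Sigma$, so $|\Sigma|=|\det R|\,|\Sigma_s|$, and $\det G=\det(R^\top R)=(\det R)^2$. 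This gives \eqref{EQ_1023_001}.

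\textbf{Step 2 (a normal field that is tangent and outward).} Fix a flat $(m-1)$-dimensional facet $e_s\subset\partial\Sigma_s$ with unit outward normal $\bn_s$, and let $e=F(e_s)$. Set $\mathbf{w}=AG^{-1}\bn_s$.
\begin{itemize}
\item $\mathbf{w}$ lies in the column space of $A$, so it is tangent to $\Sigma$.
\item If $\mathbf{t}\in\mathbb{R}^m$ is tangent to $e_s$, then $A\mathbf{t}$ is tangent to $e$, and $\mathbf{w}\cdot A\mathbf{t}=\bn_s^\top G^{-1}A^\top A\mathbf{t}=\bn_s\cdot\mathbf{t}=0$. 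Hence $\mathbf{w}\perp e$ within the plane of $\Sigma$.
\item For outwardness, take $\mathbf{u}$ pointing out of $\Sigma_s$ across $e_s$, so $\bn_s\cdot\mathbf{u}>0$. Then $A\mathbf{u}$ points out of $\Sigma$, and $\mathbf{w}\cdot A\mathbf{u}=\bn_s\cdot\mathbf{u}>0$.
\end{itemize}
Therefore $\mathbf{w}/|\mathbf{w}|$ is the unit outward conormal $\bn_x$, and it only remains to show that
\[
|\mathbf{w}|\,\frac{|\partial\Sigma_s|}{|\partial\Sigma|}\,\frac{|\Sigma|}{|\Sigma_s|}=1
\]
on each facet.

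\textbf{Step 3 (the normalization — the main obstacle).} Here $|\partial\Sigma_s|/|\partial\Sigma|$ must be read facet-wise, as $|e_s|/|e|$; this is how the statement ``on each flat portion'' is interpreted. First, $|\mathbf{w}|^2=\bn_s^\top G^{-1}A^\top AG^{-1}\bn_s=\bn_s^\top G^{-1}\bn_s$. Next, compute $|e|/|e_s|$ by applying the volume scaling of Step 1 twice, using a pyramid argument. Take a point $\mathbf{p}_s\in\Sigma_s$ at height $h_s$ above the hyperplane of $e_s$ along $-\bn_s$, and form the cone $C_s$ over $e_s$ with apex $\mathbf{p}_s$, so $|C_s|=h_s|e_s|/m$. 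Its image is a cone over $e$ with volume $h|e|/m$, where $h$ is the image height. By Step 1, $h|e|=\sqrt{\det G}\,h_s|e_s|$. The image height is $h=|\mathbf{w}\cdot A\mathbf{d}|/|\mathbf{w}|$ for any displacement $\mathbf{d}$ from $e_s$ to $\mathbf{p}_s$; since $\mathbf{w}\cdot A\mathbf{d}=\bn_s\cdot\mathbf{d}=-h_s$, this gives $h=h_s/|\mathbf{w}|$. Combining,
\[
\frac{|e|}{|e_s|}=\sqrt{\det G}\,|\mathbf{w}|,
\]
so
\[
|\mathbf{w}|\,\frac{|e_s|}{|e|}\,\sqrt{\det G}=1,
\]
which is exactly \eqref{EQ_1023_002}. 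The delicate points are that the ratio must be taken facet by facet, and that the height scales by $1/|\mathbf{w}|$. A cross-check is the cofactor identity $|e|/|e_s|=\sqrt{\det G}\sqrt{\bn_s^\top G^{-1}\bn_s}$, the Nanson-type formula for the restricted map.
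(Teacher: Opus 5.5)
Your proof is correct. Step 2 is essentially the paper's lemma in Section~8, but Steps 1 and 3 take different routes.

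For the volume ratio, the paper completes $A$ with an orthonormal basis of $(\operatorname{col}A)^\perp$ to a square matrix. Its Gram matrix is block-diagonal with blocks $A^\top A$ and $I_{n-m}$. The paper does this only for the parallelepiped $P_m$ and leaves the passage to a general $\Sigma_s$ implicit. Your factorization $A=QR$ gives the scaling factor $|\det R|=\sqrt{\det G}$ for every measurable set at once. You genuinely need that general version, because you apply it to the cone $C_s$.

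For the normalization, the paper stays purely algebraic:
\begin{itemize}
\item It takes $B=[B_{m-1},\boldsymbol{\phi}_m]$, with $B_{m-1}$ an orthonormal frame of the facet and $\boldsymbol{\phi}_m=G^{-1}\bn_s/(\bn_s^\top G^{-1}\bn_s)$.
\item It shows $|\det B|=1$ by block elimination.
\item It notes that $AB=[Q,\,A\boldsymbol{\phi}_m]$ has a block-diagonal Gram matrix, since $A\boldsymbol{\phi}_m\perp\operatorname{col}Q$. Hence $\det G=\det(Q^\top Q)\,\|A\boldsymbol{\phi}_m\|^2=(|e|/|e_s|)^2/(\bn_s^\top G^{-1}\bn_s)$.
\end{itemize}
This is the same base-times-height splitting you use. $\boldsymbol{\phi}_m$ is a unit-height step off the facet ($\bn_s\cdot\boldsymbol{\phi}_m=1$) whose image is orthogonal to $e$. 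So $\|A\boldsymbol{\phi}_m\|=1/|\mathbf{w}|$ is exactly your ratio $h/h_s$, realized with a parallelepiped edge instead of a cone apex.

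Your argument is shorter and geometrically more transparent, but it leans on the cone-volume formula and on measure scaling for arbitrary sets. The paper's argument needs only Gram-determinant identities.

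Your observation that $|\partial\Sigma_s|/|\partial\Sigma|$ must be read facet by facet matches how the paper actually proves the identity, face by face.
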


Substituting the result \eqref{EQ_1023_002} into the expansion \eqref{EQ_1022_012}, the integral reduces to:
\begin{equation}\label{EQ_1022_015}
\begin{aligned}
\int_{\Sigma} \frac{\phi}{(m-1)!} \, d\Sigma &= \frac{1}{m!} \int_{\partial\Sigma} \phi \left[ (\bx-\bx_0) \cdot \bn_x \right] \, d(\partial\Sigma) \\
&\quad - \frac{1}{(m+1)!} \int_{\partial\Sigma} \left( \nabla_x\phi : (\bx-\bx_0) \right) \left[ (\bx-\bx_0) \cdot \bn_x \right] \, d(\partial\Sigma) \\
&\quad + \frac{1}{(m+2)!} \int_{\partial\Sigma} \left( \nabla_x^2\phi : (\bx-\bx_0)^2 \right) \left[ (\bx-\bx_0) \cdot \bn_x \right] \, d(\partial\Sigma) + \cdots
\end{aligned}
\end{equation}
Note that the scalar quantity $(\bx-\bx_0) \cdot \bn_x$ remains constant on each flat portion of $\partial\Sigma$. 

The expansion \eqref{EQ_1022_015} serves as a theoretical foundation for the development of numerical quadrature formulas for surface integrals. Such formulas may be constructed either by performing numerical integration over each flat face of $\partial\Sigma$, or by recursively reducing the integration to the boundaries of $\partial\Sigma$ and so on. The latter strategy is particularly effective for problems formulated in higher-dimensional spaces. Further analytical details and implementation aspects will be reported elsewhere.

\section{\texorpdfstring{$m$}{m}-Dimensional Volume of a Parallelepiped}\label{Section:7}

Let $n \ge m \ge 1$. Given $m$ linearly independent vectors 
$$
\mathbf{v}_1, \mathbf{v}_2, \ldots, \mathbf{v}_m \in \mathbb{R}^n, 
$$
and a base point (vertex) $\mathbf{x}_0 \in \mathbb{R}^n$, the $m$-dimensional parallelepiped generated by these vectors is defined as:
\begin{equation}\label{EQ_1023_800}
P_m := \left\{ \mathbf{x}_0 + \sum_{i=1}^m s_i \mathbf{v}_i : \ 0 \le s_i \le 1, \ \forall i=1,\ldots, m \right\}.
\end{equation}
Geometrically, the point $\mathbf{x}_0$ is a vertex of $P_m$, while the vectors $\{\mathbf{v}_i\}$ represent the edges emanating from $\mathbf{x}_0$. The shape of $P_m$ is an $m$-dimensional ``box'' lying in the affine subspace:
$$
\mathbf{x}_0 + \operatorname{span}\{\mathbf{v}_1, \mathbf{v}_2, \ldots, \mathbf{v}_m \}.
$$

\begin{theorem}
Let $P_m$ be the $m$-dimensional parallelepiped defined by \eqref{EQ_1023_800}, and let 
$$
A = [\mathbf{v}_1, \mathbf{v}_2, \ldots, \mathbf{v}_m]
$$ 
be the $n \times m$ matrix whose columns are the generating vectors. Then, the $m$-dimensional volume is given by:
$$
\operatorname{Vol}_m(P_m) = \sqrt{\det(A^\top A)}.
$$ 
Consequently, the identity \eqref{EQ_1023_001} holds.
\end{theorem}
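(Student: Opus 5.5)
The plan is to reduce to the full-dimensional case by an isometry, and then use the classical fact that an $m\times m$ matrix maps the unit cube to a set of volume $|\det|$. The underlying notion of $m$-dimensional volume of a set lying in an $m$-dimensional affine subspace is Lebesgue measure after identifying that subspace isometrically with $\mathbb{R}^m$.

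\textbf{Step 1 (QR factorization).} Translate so that $\mathbf{x}_0=\mathbf{0}$; translations preserve volume. Apply Gram--Schmidt to the linearly independent columns $\mathbf{v}_1,\dots,\mathbf{v}_m$ of $A$. This gives $A=QR$, where $Q$ is $n\times m$ with orthonormal columns spanning $\operatorname{span}\{\mathbf{v}_i\}$, and $R$ is $m\times m$, upper triangular and invertible.

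\textbf{Step 2 (reduction to $\mathbb{R}^m$).} The map $\mathbf{y}\mapsto Q\mathbf{y}$ is a linear isometry from $\mathbb{R}^m$ onto that span, since $Q^\top Q=I_m$. It therefore transports $m$-dimensional Lebesgue measure on $\mathbb{R}^m$ to $m$-dimensional measure on the span. Moreover $P_m=Q\bigl(R[0,1]^m\bigr)$, so
\[
\operatorname{Vol}_m(P_m)=\operatorname{Vol}_m\bigl(R[0,1]^m\bigr).
\]

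\textbf{Step 3 (change of variables).} By the change of variables formula for linear maps on $\mathbb{R}^m$ (or directly via the Jacobian), $\operatorname{Vol}_m(R[0,1]^m)=|\det R|$. If one prefers to avoid measure theory, this can also be checked by decomposing $R$ into elementary row operations, each of which is a shear or scaling with the known effect on volume.

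\textbf{Step 4 (Gram determinant).} Compute
\[
A^\top A=R^\top Q^\top Q R=R^\top R,
\]
so $\det(A^\top A)=(\det R)^2$. Hence $\operatorname{Vol}_m(P_m)=\sqrt{\det(A^\top A)}$.

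\textbf{Consequence \eqref{EQ_1023_001}.} Let $\Sigma=F(\Sigma_s)$ with $F(\bs)=\bx_0+A\bs$. The same argument shows that for any measurable $\Sigma_s\subset\mathbb{R}^m$,
\[
|F(\Sigma_s)|=|Q R\,\Sigma_s|=|\det R|\,|\Sigma_s|.
\]
Thus the ratio $|\Sigma|/|\Sigma_s|$ equals $\sqrt{\det(A^\top A)}$ independently of the shape of $\Sigma_s$. Alternatively, one can approximate $\Sigma_s$ by unions of small cubes and apply the parallelepiped result to each.

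\textbf{Main obstacle.} The main difficulty is conceptual rather than computational: $m$-dimensional volume in $\mathbb{R}^n$ must be defined precisely. Here it is taken as Lebesgue measure transported by an isometric parametrization, which is equivalent to Hausdorff measure. We must also check that this definition is independent of the choice of orthonormal basis $Q$. Independence holds because two such bases differ by an $m\times m$ orthogonal matrix, whose determinant is $\pm1$.
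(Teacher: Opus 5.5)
Your proof is correct, but it follows a different route from the paper's. The paper argues extrinsically. It appends an orthonormal basis $\mathbf{v}_{m+1},\dots,\mathbf{v}_n$ of the orthogonal complement and forms $\tilde A=[A,\mathbf{v}_{m+1},\dots,\mathbf{v}_n]$. It then asserts $\operatorname{Vol}_n(P_n)=\operatorname{Vol}_m(P_m)$ for the resulting $n$-dimensional parallelepiped, and reads off $|\det\tilde A|^2=\det(\tilde A^\top\tilde A)=\det(A^\top A)$ from the block-diagonal form of $\tilde A^\top\tilde A$. You argue intrinsically: $A=QR$ pulls $P_m$ back isometrically to $R[0,1]^m\subset\mathbb{R}^m$, and $A^\top A=R^\top R$ replaces the block computation. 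The two arguments are closely linked, since with the orthogonal matrix $U=[Q,\mathbf{v}_{m+1},\dots,\mathbf{v}_n]$ one has $\tilde A=U\operatorname{diag}(R,I_{n-m})$. However, they put the real content in different places. The paper reduces everything to a single $n\times n$ determinant, at the price of the cylinder identity $\operatorname{Vol}_n(P_n)=\operatorname{Vol}_m(P_m)$, which it justifies only heuristically. Making that step rigorous needs the very definition of $m$-dimensional measure on a subspace that you state explicitly, plus a Fubini-type product argument. Your version avoids that step, checks that the definition does not depend on the choice of orthonormal basis, and needs only the change-of-variables formula in $\mathbb{R}^m$. It also handles the ``consequently'' clause more cleanly. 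Because $|F(\Sigma_s)|=|\det R|\,|\Sigma_s|$ for every measurable $\Sigma_s$, you get \eqref{EQ_1023_001} for an arbitrary polytope $\Sigma_s$ directly. The paper proves the formula only for images of the unit cube and leaves the passage to general $\Sigma_s$ implicit.
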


\begin{proof}
Without loss of generality, assume $\mathbf{x}_0 = \mathbf{0}$. Let $\mathbb{V}_m = \operatorname{span}\{\mathbf{v}_1, \ldots, \mathbf{v}_m\}$, and let $\mathbb{V}_m^\perp$ be its orthogonal complement in $\mathbb{R}^n$. Choose an orthonormal basis for $\mathbb{V}_m^\perp$, denoted by $\{\mathbf{v}_{m+1}, \ldots, \mathbf{v}_n\}$.

Construct an auxiliary $n$-dimensional parallelepiped $P_n$ using the full set of vectors:
\begin{equation}\label{EQ_1023_801}
P_n := \left\{ \sum_{i=1}^n s_i \mathbf{v}_i : \ 0 \le s_i \le 1, \ \forall i=1,\ldots, n \right\}.
\end{equation}
Since the additional vectors $\{\mathbf{v}_{m+1}, \ldots, \mathbf{v}_n\}$ are orthonormal and orthogonal to $\mathbb{V}_m$, they contribute unit length in orthogonal directions. Thus:
\begin{equation}\label{EQ_1023_802}
\operatorname{Vol}_n(P_n) = \operatorname{Vol}_m(P_m).
\end{equation}

Let $\tilde{A} = [A, \mathbf{v}_{m+1}, \ldots, \mathbf{v}_n]$ be the $n \times n$ matrix representing the linear map from the unit hypercube $[0,1]^n$ to $P_n$. The volume is given by the determinant:
\begin{equation}\label{EQ_1023_803}
\operatorname{Vol}_n(P_n) = |\det(\tilde{A})| = \sqrt{\det(\tilde{A}^\top \tilde{A})}.
\end{equation}
Computing the product $\tilde{A}^\top \tilde{A}$ explicitly:
$$
\tilde{A}^\top \tilde{A} = 
\begin{bmatrix}
A^\top \\
[\mathbf{v}_{m+1} \dots \mathbf{v}_n]^\top
\end{bmatrix}
\begin{bmatrix}
A & [\mathbf{v}_{m+1} \dots \mathbf{v}_n]
\end{bmatrix}
=
\begin{bmatrix}
A^\top A & \mathbf{0} \\
\mathbf{0} & I_{n-m}
\end{bmatrix}.
$$
It follows that $\det(\tilde{A}^\top \tilde{A}) = \det(A^\top A) \cdot \det(I_{n-m}) = \det(A^\top A)$. Combining this with \eqref{EQ_1023_802} and \eqref{EQ_1023_803} completes the proof.
\end{proof}

\section{Transformation of Normal Vectors}\label{Section:8}

Let $\Sigma \subset \mathbb{R}^n$ be an $m$-dimensional polytope characterized by the affine map $F: \Sigma_s \to \Sigma$ defined by:
$$
\mathbf{x} = F(\mathbf{s}) = \mathbf{x}_0 + \sum_{j=1}^m s_j \mathbf{v}_j, \quad \mathbf{s} = (s_1, \ldots, s_m) \in \Sigma_s,
$$
where $\Sigma_s \subset \mathbb{R}^m$ is a regular polytope in the parameter space $\mathbb{R}^m$.

The goal of this section is to verify the geometric identity \eqref{EQ_1023_002}. Specifically, we show that the vector
\begin{equation}\label{EQ_1023_002_new}
\mathbf{n}_x = A (A^\top A)^{-1} \mathbf{n}_s \frac{|\partial\Sigma_s|}{|\partial\Sigma|} \frac{|\Sigma|}{|\Sigma_s|}
\end{equation}
is the unit outward normal vector to $\partial\Sigma$ within the affine subspace of $\Sigma$. Using the volume formula \eqref{EQ_1023_001}, this can be rewritten as:
\begin{equation}\label{EQ_1023_002_new_02}
\mathbf{n}_x = \alpha A (A^\top A)^{-1} \mathbf{n}_s, \quad \text{where } \alpha = \frac{|\partial\Sigma_s|}{|\partial\Sigma|} \sqrt{\det(A^\top A)}.
\end{equation}

\begin{lemma}
The vector $\mathbf{n}_x$ defined in \eqref{EQ_1023_002_new_02} is the outward normal to $\Sigma$ on the boundary $\partial\Sigma$.
\end{lemma}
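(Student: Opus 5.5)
We will show three things about $\mathbf{n}_x = \alpha A (A^\top A)^{-1}\mathbf{n}_s$ on a fixed flat facet $E_s\subset\partial\Sigma_s$ with image $E=F(E_s)\subset\partial\Sigma$. First, $\mathbf{n}_x$ lies in the affine subspace of $\Sigma$. Second, it is orthogonal to the facet $E$. Third, it points outward. Here $\mathbf{n}_s$ is the unit outward normal of $E_s$ in $\mathbb{R}^m$ and $\alpha>0$. The unit length claimed in \eqref{EQ_1023_002} is a separate normalization issue, discussed at the end.

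\emph{Tangency to $\Sigma$.} Since $\mathbf{n}_x = A\mathbf{w}$ with $\mathbf{w}=\alpha(A^\top A)^{-1}\mathbf{n}_s\in\mathbb{R}^m$, the vector $\mathbf{n}_x$ lies in the column space $\mathbb{V}_m=\operatorname{span}\{\mathbf{v}_1,\dots,\mathbf{v}_m\}$. This is exactly the tangent space of $\Sigma$. Here $A^\top A$ is invertible because the $\mathbf{v}_j$ are linearly independent.

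\emph{Orthogonality to $E$.} Any tangent vector to $E$ has the form $A\mathbf{t}$, where $\mathbf{t}\in\mathbb{R}^m$ is tangent to $E_s$, so that $\mathbf{t}\cdot\mathbf{n}_s=0$. We then compute
\[
\mathbf{n}_x\cdot A\mathbf{t} = \alpha\,\mathbf{n}_s^\top (A^\top A)^{-1}A^\top A\,\mathbf{t} = \alpha\,\mathbf{n}_s\cdot\mathbf{t} = 0,
\]
using the symmetry of $(A^\top A)^{-1}$. Hence $\mathbf{n}_x$ is a normal to $E$ within $\mathbb{V}_m$. Moreover, $\mathbf{n}_x\neq 0$ because $A$ has full column rank.

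\emph{Outward direction.} Take $\mathbf{s}\in E_s$ and an interior point $\mathbf{s}'\in\Sigma_s$. Outwardness of $\mathbf{n}_s$ means $(\mathbf{s}-\mathbf{s}')\cdot\mathbf{n}_s>0$. For the corresponding points $\mathbf{x}=F(\mathbf{s})$ and $\mathbf{x}'=F(\mathbf{s}')$, the same computation gives
\[
(\mathbf{x}-\mathbf{x}')\cdot\mathbf{n}_x = \alpha\,(\mathbf{s}-\mathbf{s}')\cdot\mathbf{n}_s>0,
\]
since $\alpha>0$. So $\mathbf{n}_x$ points out of $\Sigma$.

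\emph{Normalization (main obstacle).} The lemma as stated only asks for the outward normal direction, but \eqref{EQ_1023_002} also asserts unit length. Here $|\partial\Sigma_s|/|\partial\Sigma|$ must be read facetwise as $|E_s|/|E|$, since a single global ratio would not work. The plan is to apply the theorem of Section \ref{Section:7} twice. Applied to $E_s$ inside $\mathbb{R}^m$, it gives $|E|/|E_s|=\sqrt{\det(B^\top A^\top A B)}$, where $B$ is an $m\times(m-1)$ matrix whose columns are an orthonormal basis of $\mathbf{n}_s^\perp$. Applied to the full $m\times m$ matrix $[B,\mathbf{n}_s]$, it gives $\det(A^\top A)$.

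To connect these, we will use the Schur complement, or equivalently Cramer's rule or cofactors. With $G=A^\top A$, this yields
\[
\det(B^\top G B)=\det(G)\,\mathbf{n}_s^\top G^{-1}\mathbf{n}_s .
\]
It follows that
\[
|A G^{-1}\mathbf{n}_s|^2=\mathbf{n}_s^\top G^{-1}\mathbf{n}_s=\frac{(|E|/|E_s|)^2}{\det G},
\]
and therefore $|\mathbf{n}_x|=\alpha\,|AG^{-1}\mathbf{n}_s|=1$. Verifying this determinant identity cleanly is the step we expect to be hardest.
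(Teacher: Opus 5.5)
Your proof of the lemma follows the paper's own argument: $\mathbf{n}_x$ lies in the column space of $A$, the cancellation $\mathbf{n}_s^\top (A^\top A)^{-1}A^\top A\,\mathbf{t}=\mathbf{n}_s\cdot\mathbf{t}$ gives orthogonality to the facet, and the same identity applied to $\mathbf{x}-\mathbf{x}'$ transfers the outward sign from $\mathbf{n}_s$ to $\mathbf{n}_x$ (like the paper, you tacitly need the interior point to be near the facet, or $\Sigma_s$ to be convex, for that sign condition). Your normalization paragraph goes beyond the lemma and correctly reproves the later identity \eqref{EQ_1023_701} via the complementary-minor identity $\det(B^\top G B)=\det(G)\,\mathbf{n}_s^\top G^{-1}\mathbf{n}_s$ for the orthogonal matrix $[B,\mathbf{n}_s]$, which is shorter than the paper's route through the oblique vector $\boldsymbol{\phi}_m$ and the elimination matrix $L$.
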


\begin{proof}
Since $\mathbf{n}_x$ lies in the column space of $A$, it lies in the tangent space of the flat surface $\Sigma$. We must check its orthogonality to the boundary $\partial\Sigma$.

Let $\mathbf{n}_s$ be the unit outward normal at $\mathbf{s}^* \in \partial\Sigma_s$, and let $\mathbf{x}^* = \mathbf{x}_0 + A\mathbf{s}^*$. For any point $\mathbf{x} = \mathbf{x}_0 + A\mathbf{s} \in \partial\Sigma$ on the same flat face as $\mathbf{x}^*$, the dot product is:
$$
(\mathbf{n}_x, \mathbf{x} - \mathbf{x}^*) = \alpha \left( A(A^\top A)^{-1} \mathbf{n}_s \right) \cdot \left( A(\mathbf{s} - \mathbf{s}^*) \right).
$$
Using the property $(\mathbf{u}, A\mathbf{v}) = (A^\top \mathbf{u}, \mathbf{v})$, we obtain:
$$
(\mathbf{n}_x, \mathbf{x} - \mathbf{x}^*) = \alpha \left( (A^\top A)^{-1} \mathbf{n}_s \right)^\top (A^\top A) (\mathbf{s} - \mathbf{s}^*) = \alpha \mathbf{n}_s \cdot (\mathbf{s} - \mathbf{s}^*) = 0,
$$
since $\mathbf{n}_s \perp (\mathbf{s} - \mathbf{s}^*)$. Thus, $\mathbf{n}_x$ is normal to the boundary. Furthermore, for an interior point $\mathbf{s}_{\text{in}}$, we have $\mathbf{n}_s \cdot (\mathbf{s}_{\text{in}} - \mathbf{s}^*) \le 0$, which implies $(\mathbf{n}_x, \mathbf{x}_{\text{in}} - \mathbf{x}^*) = \alpha(\mathbf{n}_s, \mathbf{s}_{\text{in}} - \mathbf{s}^*)\le 0$. Hence, $\mathbf{n}_x$ is outward-pointing.
\end{proof}

To ensure $\mathbf{n}_x$ is a unit vector (i.e., $\|\mathbf{n}_x\|^2 = 1$), the scaling factor must satisfy:
\begin{equation}\label{EQ_1023_701}
\frac{|\partial\Sigma|^2}{|\partial\Sigma_s|^2} = \det(A^\top A) \left( (A^\top A)^{-1} \mathbf{n}_s, \mathbf{n}_s \right).
\end{equation}
Using the identity $\det(M)M^{-1} = \operatorname{Cof}(M)^\top$ and the symmetry of $A^\top A$, this is equivalent to:
\begin{equation}\label{EQ_1023_709}
\frac{|\partial\Sigma|^2}{|\partial\Sigma_s|^2} = \left( \operatorname{Cof}(A^\top A) \mathbf{n}_s, \mathbf{n}_s \right).
\end{equation}

\begin{theorem}
The identity \eqref{EQ_1023_701}, and equivalently \eqref{EQ_1023_709}, holds for any flat face of the polytope.
\end{theorem}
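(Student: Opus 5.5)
Here $|\partial\Sigma|$ and $|\partial\Sigma_s|$ are read face by face: they are the $(m-1)$-dimensional measures of a flat face $E\subset\partial\Sigma$ and of its preimage $E_s\subset\partial\Sigma_s$. The identity \eqref{EQ_1023_701} is therefore a statement about the ratio $|E|/|E_s|$ of $(m-1)$-dimensional measures under the linear map $A$. The plan is to compute this ratio with the parallelepiped volume formula of Section \ref{Section:7}.

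\textbf{Step 1: reduce to a parallelepiped.} Since the map is affine, the ratio $|E|/|E_s|$ depends only on the linear map restricted to the hyperplane $\mathbf{n}_s^\perp\subset\mathbb{R}^m$ containing $E_s - \mathbf{s}^*$. So it suffices to compare the $(m-1)$-volumes of a unit cube in $\mathbf{n}_s^\perp$ and of its image. Choose an orthonormal basis $\mathbf{u}_1,\dots,\mathbf{u}_{m-1}$ of $\mathbf{n}_s^\perp$ and set $U=[\mathbf{u}_1,\dots,\mathbf{u}_{m-1}]$, an $m\times(m-1)$ matrix. By the Section \ref{Section:7} theorem, applied to $AU$ with $m-1$ in place of $m$,
\[
\frac{|E|^2}{|E_s|^2}=\det\bigl(U^\top A^\top A\, U\bigr).
\]

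\textbf{Step 2: the determinant identity.} Let $M=A^\top A$, which is symmetric positive definite, and let $Q=[U,\mathbf{n}_s]$, which is orthogonal. Then $\det(Q^\top M Q)=\det M$. Apply the Schur complement or cofactor formula to $Q^\top M Q$: the $(m,m)$ entry of $(Q^\top M Q)^{-1}=Q^\top M^{-1}Q$ equals $\det(U^\top M U)/\det(Q^\top MQ)$. That entry is $\mathbf{n}_s^\top M^{-1}\mathbf{n}_s$. Hence
\[
\det(U^\top M U)=\det(M)\,(M^{-1}\mathbf{n}_s,\mathbf{n}_s),
\]
which is exactly \eqref{EQ_1023_701}. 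The form \eqref{EQ_1023_709} then follows from $\det(M)M^{-1}=\operatorname{Cof}(M)^\top$ together with the symmetry of $M$.

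\textbf{Step 3: consistency.} Check the case $m=1$ separately. There the boundary consists of points, the $0$-dimensional measures equal $1$, and \eqref{EQ_1023_701} reads $1=|\mathbf{v}_1|^2\cdot|\mathbf{v}_1|^{-2}$, which holds trivially.

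The main obstacle is Step 2. One has to set up the cofactor/inverse-entry identity cleanly, namely $(B^{-1})_{mm}=\det(B_{[m-1]})/\det B$ for the leading principal block of $B=Q^\top M Q$, and make sure the orthogonal change of basis leaves both $\det M$ and the quadratic form unchanged. A smaller point in Step 1 also needs care: the image measure $|E|$ must be computed through the composite map $AU$, which is injective because $A$ has full column rank, so that Section \ref{Section:7} applies.
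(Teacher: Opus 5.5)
Your proof is correct, and your Step 1 is the same reduction the paper uses. The paper also writes $|\sigma|^2/|\sigma_s|^2=\det(Q^\top Q)$ with $Q=AB_{m-1}$, where $B_{m-1}$ is an orthonormal basis of the face's tangent space in parameter space (your $U$). The routes differ in Step 2.

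\textbf{How the paper handles Step 2.} Instead of quoting the cofactor formula for the $(m,m)$ entry of an inverse, the paper sets $M=A^\top A$ and builds $\boldsymbol{\phi}_m=M^{-1}\mathbf{n}_s/(\mathbf{n}_s^\top M^{-1}\mathbf{n}_s)$. Its $\mathbf{n}_s$-component is exactly $1$, so $B=[B_{m-1},\boldsymbol{\phi}_m]$ has $|\det B|=1$; the paper checks this by an elimination on $B^\top B$. Since $(AB_{m-1})^\top A\boldsymbol{\phi}_m=B_{m-1}^\top\mathbf{n}_s/(\mathbf{n}_s^\top M^{-1}\mathbf{n}_s)=\mathbf{0}$, the Gram matrix of $AB$ is block diagonal. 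This gives $\det M=\det(Q^\top Q)\,\boldsymbol{\phi}_m^\top M\boldsymbol{\phi}_m=\det(Q^\top Q)/(\mathbf{n}_s^\top M^{-1}\mathbf{n}_s)$.

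\textbf{How the two arguments relate.} They are the same Schur-complement identity seen from opposite ends. The vector $\boldsymbol{\phi}_m$ is $\mathbf{n}_s$ made $M$-orthogonal to $\operatorname{span}U$. The quantity $\boldsymbol{\phi}_m^\top M\boldsymbol{\phi}_m$ is the Schur complement of $U^\top MU$ in $[U,\mathbf{n}_s]^\top M[U,\mathbf{n}_s]$. That is exactly the reciprocal of the $(m,m)$ entry of its inverse, which is what you compute.

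\textbf{What each route gives you.} Your version is shorter and purely algebraic: once the cofactor formula is invoked, no unimodularity or orthogonality checks are needed. The paper's version is a base-times-height decomposition of the $m$-volume. Its height direction $A\boldsymbol{\phi}_m$ is parallel to $AM^{-1}\mathbf{n}_s$, i.e.\ to the vector $\mathbf{n}_x$ of the preceding lemma. This ties the determinant identity directly to the normal whose unit length it is meant to certify.

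Your separate check of $m=1$ is a harmless addition. The paper's argument covers that case only implicitly, through the convention that the empty determinant equals $1$.
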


\begin{proof}
Let $\sigma_s \subset \partial\Sigma_s$ be a flat face of the boundary in the parameter space, and let $\sigma \subset \partial\Sigma$ be its corresponding image in $\mathbb{R}^n$. Select a fixed point $\mathbf{s}_\sigma \in \sigma_s$.

We construct a specific basis for $\mathbb{R}^m$ to simplify the determinant calculations.
First, choose an orthonormal basis for the tangent space of the flat face $\sigma_s$:
$$
\{\boldsymbol{\phi}_1, \ldots, \boldsymbol{\phi}_{m-1}\}.
$$
We augment this set by introducing a specific vector $\boldsymbol{\phi}_m$ defined by:
$$
\boldsymbol{\phi}_m = \frac{\boldsymbol{\beta}}{\mathbf{n}_s^\top \boldsymbol{\beta}}, \quad \text{where } \boldsymbol{\beta} = (A^\top A)^{-1}\mathbf{n}_s.
$$
Note that since $\mathbf{n}_s$ is the unit normal to the face $\sigma_s$, it is orthogonal to all tangent vectors $\boldsymbol{\phi}_1, \ldots, \boldsymbol{\phi}_{m-1}$. We can decompose $\boldsymbol{\phi}_m$ relative to this normal direction:
\begin{equation}\label{Phi_Decomposition}
\boldsymbol{\phi}_m = \mathbf{n}_s + \boldsymbol{\beta}_\tau, \quad \text{where } \boldsymbol{\beta}_\tau \in \operatorname{span}\{\boldsymbol{\phi}_1, \ldots, \boldsymbol{\phi}_{m-1}\}.
\end{equation}
The coefficient of $\mathbf{n}_s$ is exactly $1$ because $\mathbf{n}_s^\top \boldsymbol{\phi}_m = (\mathbf{n}_s^\top \boldsymbol{\beta}) / (\mathbf{n}_s^\top \boldsymbol{\beta}) = 1$. Consequently, the squared norm is:
\begin{equation}\label{Phi_Norm}
\|\boldsymbol{\phi}_m\|^2 = \|\mathbf{n}_s\|^2 + \|\boldsymbol{\beta}_\tau\|^2 = 1 + \|\boldsymbol{\beta}_\tau\|^2.
\end{equation}

Now, let $B$ be the $m \times m$ change-of-basis matrix:
$$
B = [B_{m-1}, \boldsymbol{\phi}_m], \quad \text{where } B_{m-1} = [\boldsymbol{\phi}_1, \ldots, \boldsymbol{\phi}_{m-1}].
$$
The affine map $\mathbf{s} \mapsto \mathbf{s}_\sigma + B\tilde{\mathbf{s}}$ transforms the polytope $\Sigma_s$ into another polytope $\tilde\Sigma_s$, and correspondingly maps $\sigma_s\subset\partial\Sigma_s$ to $\tilde\sigma_s\subset\partial\tilde\Sigma_s$. For any $\mathbf{s}\in\sigma_s$, the vector $\mathbf{s}-\mathbf{s}_\sigma$ is tangent to the flat face $\sigma_s$, hence $\mathbf{s}-\mathbf{s}_\sigma=\sum_{j=1}^{m-1} t_j \boldsymbol{\phi}_j$. It follws that $|\sigma_s| = |\tilde\sigma_s|$; i.e., this map preserves the $(m-1)$-dimensional measure of the face $\sigma_s$.

We now compute the determinant of $B$ by analyzing $B^\top B$:
\begin{equation}
B^\top B = 
\begin{bmatrix}
B_{m-1}^\top \\ \boldsymbol{\phi}_m^\top
\end{bmatrix}
\begin{bmatrix}
B_{m-1} & \boldsymbol{\phi}_m
\end{bmatrix}
= 
\begin{bmatrix}
I_{m-1} & B_{m-1}^\top \boldsymbol{\phi}_m \\
\boldsymbol{\phi}_m^\top B_{m-1} & \|\boldsymbol{\phi}_m\|^2
\end{bmatrix}.
\end{equation}
To diagonalize this matrix, we introduce the lower-triangular elimination matrix $L$:
$$
L = 
\begin{bmatrix}
I_{m-1} & \mathbf{0} \\
-\boldsymbol{\phi}_m^\top B_{m-1} & 1
\end{bmatrix}.
$$
Multiplying $L$ by $B^\top B$ performs row operations to eliminate the bottom-left block:
$$
L (B^\top B) = 
\begin{bmatrix}
I_{m-1} & B_{m-1}^\top \boldsymbol{\phi}_m \\
\mathbf{0} & -\boldsymbol{\phi}_m^\top B_{m-1} B_{m-1}^\top \boldsymbol{\phi}_m + \|\boldsymbol{\phi}_m\|^2
\end{bmatrix}.
$$
We simplify the bottom-right scalar term. Using the decomposition \eqref{Phi_Decomposition}, we observe that $B_{m-1}^\top \boldsymbol{\phi}_m = B_{m-1}^\top (\mathbf{n}_s + \boldsymbol{\beta}_\tau) = B_{m-1}^\top \boldsymbol{\beta}_\tau$.
Since $\boldsymbol{\beta}_\tau$ lies in the span of the columns of $B_{m-1}$, and columns of $B_{m-1}$ are orthonormal, the projection preserves the length:
$$
\boldsymbol{\phi}_m^\top B_{m-1} B_{m-1}^\top \boldsymbol{\phi}_m = \|B_{m-1}^\top \boldsymbol{\beta}_\tau\|^2 = \|\boldsymbol{\beta}_\tau\|^2.
$$
Substituting this and \eqref{Phi_Norm} into the matrix expression:
$$
-\|\boldsymbol{\beta}_\tau\|^2 + (1 + \|\boldsymbol{\beta}_\tau\|^2) = 1.
$$
Thus, the matrix reduces to an upper triangular form with $1$ on the diagonal:
$$
L (B^\top B) = 
\begin{bmatrix}
I_{m-1} & B_{m-1}^\top \boldsymbol{\phi}_m \\
\mathbf{0} & 1
\end{bmatrix} =: U.
$$
Since $\det(L)=1$ and $\det(U)=1$, we conclude that $\det(B^\top B) = 1$, and therefore $|\det(B)| = 1$.

Finally, consider the combined transformation matrix $\tilde{A} = AB$. Using the determinant product rule:
$$
\det(\tilde{A}^\top \tilde{A}) = \det(B^\top A^\top A B) = \det(A^\top A) |\det(B)|^2 = \det(A^\top A).
$$
Alternatively, we can write $\tilde{A}$ using the column partition:
$$
\tilde{A} = [A B_{m-1}, \ A \boldsymbol{\phi}_m] = [Q, \ A \boldsymbol{\phi}_m].
$$
Here, $Q = A B_{m-1}$ is the matrix mapping the parameter face $\sigma_s$ to the physical face $\sigma$. Therefore, $\det(Q^\top Q)$ represents the squared ratio of the areas:
$$
\frac{|\sigma|^2}{|\sigma_s|^2} = \det(Q^\top Q).
$$
Calculating $\tilde{A}^\top \tilde{A}$ in terms of $Q$:
$$
\tilde{A}^\top \tilde{A} = 
\begin{bmatrix}
Q^\top Q & 0 \\
0 & (A\boldsymbol{\phi}_m)^\top (A\boldsymbol{\phi}_m)
\end{bmatrix}.
$$
From our choice of $\boldsymbol{\phi}_m$, the last column $A\boldsymbol{\phi}_m$ is orthogonal to the columns of $Q$ (the tangent vectors of $\Sigma$). This block-diagonal structure implies:
$$
\det(\tilde{A}^\top \tilde{A}) = \det(Q^\top Q) \cdot ( \boldsymbol{\phi}_m^\top A^\top A \boldsymbol{\phi}_m ).
$$
Substituting $\boldsymbol{\phi}_m = \frac{(A^\top A)^{-1} \mathbf{n}_s}{\mathbf{n}_s^\top (A^\top A)^{-1} \mathbf{n}_s}$, the scalar term simplifies to:
$$
\boldsymbol{\phi}_m^\top A^\top A \boldsymbol{\phi}_m = \frac{1}{\mathbf{n}_s^\top (A^\top A)^{-1} \mathbf{n}_s}.
$$
Equating the two expressions for $\det(\tilde{A}^\top \tilde{A})$:
$$
\det(A^\top A) = \det(Q^\top Q) \frac{1}{\mathbf{n}_s^\top (A^\top A)^{-1} \mathbf{n}_s}.
$$
Rearranging for the area ratio $\det(Q^\top Q)$ yields the desired result:
$$
\frac{|\sigma|^2}{|\sigma_s|^2} = \det(A^\top A) \left( \mathbf{n}_s^\top (A^\top A)^{-1} \mathbf{n}_s \right),
$$
which leads to \eqref{EQ_1023_701}. This completes the proof of the theorem.
\end{proof}

Let us illustrate the validity of \eqref{EQ_1023_709} by using the flat face $\Sigma$ shown in Figure \ref{FlatFaceSigma}. Consider the line segment $A_1A_4$ on the boundary of $\Sigma$, with endpoints 
$$
A_1 = \bx_0 + \mathbf{v}_1 s_{11} + \mathbf{v}_2 s_{12},\quad A_4 = \bx_0 + \mathbf{v}_1 s_{21} + \mathbf{v}_2 s_{22}.
$$
It follows that
$$
A_4-A_1 = \mathbf{v}_1 s_1^* + \mathbf{v}_2 s_2^*, \ s_1^*=s_{21}-s_{11},\ s_2^*=s_{22}-s_{12}.
$$
The unit nomal vector $\bn_s$ to the corresponding edge $A_1A_4$ in the $\bs$-space is given by
$$
\bn_s = (s_2^*, -s_1^*)^\top/\sqrt{(s_1^*)^2+(s_2^*)^2}.
$$ 
On the edge $A_1A_4$, one easily obtains
\begin{equation}
\begin{split}
\frac{|\partial\Sigma|^2}{|\partial\Sigma_s|^2}  = &\frac{\|A_4-A_1\|^2}{(s_1^*)^2+(s_2^*)^2}\\
=&\frac{\|\mathbf{v}_1\|^2 (s_1^*)^2 + 2s_1^*s_2^* \mathbf{v}_1\cdot\mathbf{v}_2 + \|\mathbf{v}_2\|^2 (s_2^*)^2}{(s_1^*)^2+(s_2^*)^2}\\
=& (\Cof(A^\top A)\bn_s, \bn_s),
\end{split}
\end{equation}
which verifies \eqref{EQ_1023_709} in this special case.

%\nocite{*}
\bibliographystyle{plain}   % or use "abbrv", "siam", "unsrt", etc.
\bibliography{reference}   % references.bib file (without .bib extension)
\end{document}